\colorlet{darkgreen}{green!70!black}
\colorlet{darkred}{red!85!black}
\renewcommand{\i}{\textcolor{darkgreen}{1}}
\renewcommand{\o}{\textcolor{blue}{0}}
\newcommand{\q}{\textcolor{darkred}{?}}
\newcommand{\ii}{{1}}
\newcommand{\oo}{{0}}
\newcommand{\qq}{{?}}
\newcommand{\st}{{*}}
\newcommand{\tw}{\widehat{\st\st}}
\def \Z {\mathbb{Z}}
\def \N {\mathbb{N}}
\def \P {\mathbb{P}}
\def \w {w}
\newcommand{\ques}{{?}}
\newcommand{\tsigma}{\tilde{\sigma}}
\newcommand{\altleq}{\trianglelefteq}
\newcommand{\altless}{\vartriangleleft}
\newcommand{\altgreater}{\vartriangleright}
\newcommand{\altpreceq}{\trianglelefteq}
\renewcommand{\tilde}{\widetilde}
\newcommand{\Out}{\operatorname{Out}}
\newcommand{\In}{\operatorname{In}}
\newcommand{\newmod}{\operatorname{mod}}
\newcommand{\ch}[2]{\ensuremath{\left( \begin{smallmatrix} #1 \\ #2
\end{smallmatrix} \right)}}
\newcommand{\df}{\textbf}
\newtheorem{lem}{Lemma}
\newtheorem{prop}{Proposition}
\newtheorem{thm}{Theorem}
\theoremstyle{definition}
\newtheorem{exa}{Example}
\numberwithin{equation}{section}
\numberwithin{lem}{section}
\numberwithin{prop}{section}
\numberwithin{exa}{section}
\numberwithin{rem}{section}
\title[Percolation Games]{Percolation games, probabilistic cellular automata, and the hard-core model}
\author[Holroyd]{Alexander E.\ Holroyd}
\address{Alexander E.\ Holroyd}
\email{holroyd@uw.edu}
\author[Marcovici]{Ir\`ene Marcovici}
\address{Ir{\`e}ne Marcovici,
Institut Elie Cartan de Lorraine\\
Universit\'e de Lorraine,
Campus Scientifique, BP 239,
54506 Vandoeuvre-l\`es-Nancy Cedex, France}
\email{irene.marcovici@univ-lorraine.fr}
\author[Martin]{James B.\ Martin}
\address{James B.\ Martin,
Department of Statistics,
University of Oxford,
1~South Parks Road,
Oxford OX1 3TG, UK}
\email{martin@stats.ox.ac.uk}
\date{16 February 2018}
\keywords{combinatorial game; percolation; probabilistic cellular automaton;
 ergodicity; hard-core model}
\subjclass[2010]{05C57; 60K35; 37B15}
\begin{document}
\begin{abstract}
Let each site of the square lattice $\Z^2$ be independently assigned
one of three states:
a \textit{trap} with probability $p$, a \textit{target} with
probability $q$, and \textit{open} with probability $1-p-q$,
where $0<p+q<1$.
Consider the following game:
a token starts at the origin, and two players take turns to move, where a move consists of moving the token
from its current site $x$ to either $x+(0,1)$ or $x+(1,0)$.
A player who moves the token to a trap loses the game immediately,
while a player who moves the token to a target wins the game immediately. Is there positive
probability that the game is \emph{drawn} with best play -- i.e.\ that
neither player can force a win?  This is equivalent to the question of
ergodicity of a certain family of elementary one-dimensional probabilistic cellular automata (PCA).  These automata
have been studied in the contexts of enumeration of
directed lattice animals, the golden-mean subshift, and the hard-core model, and their ergodicity has been noted as an open problem by several authors.  We prove that these PCA are ergodic, and correspondingly that
the game on $\Z^2$ has no draws.

On the other hand, we prove that certain analogous games \emph{do} exhibit draws for suitable parameter values on various directed graphs in higher dimensions, including an oriented version of the even sublattice of $\Z^d$ in all $d\geq3$. This is proved via a dimension reduction to a hard-core lattice gas in
dimension $d-1$.  We show that draws occur whenever the corresponding
hard-core model has multiple Gibbs distributions.  We conjecture that draws
occur also on the standard oriented lattice $\Z^d$ for $d\geq 3$, but here
our method encounters a fundamental obstacle.
\end{abstract}
\maketitle

\section{Introduction}
We introduce and study \textbf{percolation games}
on various graphs. For the lattice $\Z^2$, we show that the probability of a draw is 0; this is equivalent to proving ergodicity for a certain family of
probabilistic cellular automata. In higher dimensions, we prove
that draws can occur, by developing a connection
to the question of multiplicity of Gibbs distributions for
the hard-core model.


\subsection{Two dimensional games and ergodicity} \label{sec:intro2d}
Let $p,q\in[0,1)$ with $0\leq p+q\leq 1$.
Let each site of $\Z^2$ be one of three types: a \textbf{trap} with probability $p$,
a \textbf{target} with probability $q$, and
\textbf{open} with probability $1-p-q$, independently for different
sites. Consider the following two-player game. A token
starts at the origin. The players move alternately; if the token is currently at $x$,
a move consists of moving it to
$x+(0,1)$ or to $x+(1,0)$. If a player moves the token to a
trap, that player loses the game immediately.
If a player moves the token to a target, that player wins the game immediately.
Otherwise (i.e.\ if the destination site is open), the game continues with the other player's turn.

The entire random assignment of traps, targets and open sites to $\Z^2$ (which we call the \df{percolation configuration}) is known to both players at all times.  We call this game the \df{percolation game} on $\Z^2$.  We will call the special case $q=0$ (where we have only traps and open sites) the \textbf{trapping game}, and the case $p=0$ (where we have only targets and open sites) the \textbf{target game}.

\begin{figure}[!]
\centering
{\quad
\includegraphics[width=.4\textwidth]{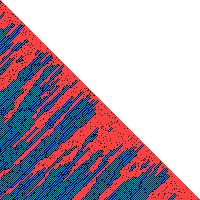}
\hfill
\includegraphics[width=.4\textwidth]{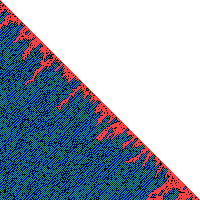}
\quad}
\caption{Outcomes of the trapping game ($q=0$) on the finite region
$\{x\in\Z_+^2: x_1+x_2\leq 200\}$,
 declaring a draw if the token reaches the diagonal $x_1+x_2=200$.
 On the left $p=0.1$ and on the right $p=0.2$.
Traps are coloured black, and otherwise
colours indicate the outcome when the game is
started from that site: first player win (blue); first player loss (green), or draw (red).}
\label{sim}
\end{figure}

If $p+q\geq 1-p_c$, where $p_c$ is the critical probability
for directed site percolation, then, with probability 1,
only finitely many sites can be reached from the origin
along directed paths of open sites, and so the game must
end in finite time.  In particular, one or other player
must have a winning strategy. (A \df{strategy} for one or
other player is a map that assigns a legal move, where one
exists, to each vertex; a \df{winning} strategy is one that
results in a win for that player, whatever strategy the
other player uses.) Suppose on the other hand that
$p+q<1-p_c$; is there now a positive probability that neither
player has a winning strategy? In that case we say that the
game is a \df{draw}, with the interpretation that it
continues for ever with best play. (When $p=q=0$ the game is
clearly always a draw.)

See Figure~\ref{sim} for simulations on a finite triangular
region, with draws imposed as a boundary condition.  As the
size of this region tends to $\infty$, the probability of a
draw starting from the origin converges to the probability
of a draw on $\Z^2$; the question is whether this limiting
probability is positive for any $p$ and $q$.

Related questions are considered in \cite{holroyd-martin}
and \cite{BHMW}, in which the underlying graph is
respectively a Galton-Watson tree, and a random subset of
the lattice with undirected moves.

In our case of a random subset of $\Z^2$ with directed
moves, the outcome (first-player win, first-player loss, draw) of the game started from each site can be interpreted in terms of the evolution
of a certain one-dimensional discrete-time probabilistic
cellular automaton (PCA); the state of the PCA at a given
time relates to the outcomes associated to the sites on a
given Northwest-Southeast diagonal of $\Z^2$.

The PCA has alphabet $\{0,1\}$ and universe $\Z$, so that a
configuration at a given time is an element of
$\{0,1\}^\Z$.  (The three game outcomes will correspond to
the two states of the PCA via a coupling of two copies of
the PCA.) The evolution of the PCA is as follows. Given a
configuration $\eta_t$ at some time $t$, the configuration
$\eta_{t+1}$ at time $t+1$ is obtained by updating each
site $n\in\Z$ simultaneously and independently, according
to the following rule.
\begin{samepage}
\begin{itemize}
\item
If $\eta_t(n-1)=\eta_t(n)=0$, then
$\eta_{t+1}(n)$ is set to $0$ with probability $p$
and $1$ with probability $1-p$.
\item
Otherwise (i.e.\ if at least one of $\eta_t(n-1)$ and $\eta_t(n)$
is $1$), $\eta_{t+1}(n)$ is set to $0$ with probability $1-q$ and $1$ with
probability $q$.
\end{itemize}
\end{samepage}
We denote this PCA $A_{p,q}$.
Its evolution rule at each site is illustrated in Figure
\ref{figure-A}.  (The time coordinate $t$ increases from
top to bottom, and the spatial coordinate $n$ increases
from left to right).
%
%
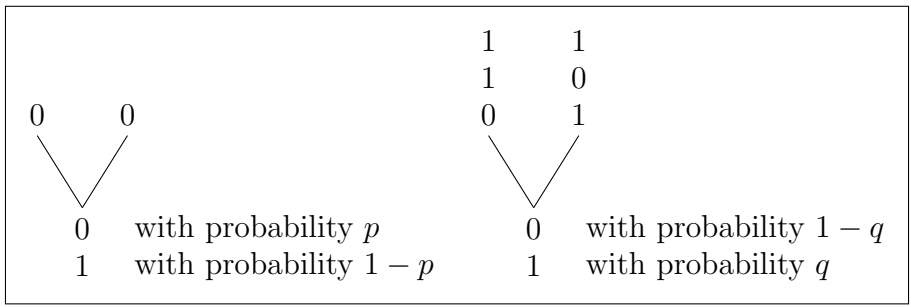
\begin{figure}
\centering
\begin{tikzpicture}[scale=.6,every text node part/.style={align=left},framed]
\begin{scope}
\node[above] (l) at (-1,1) {0};
\node[above] (r) at (1,1) {0};
\node[below] (b) at (0,-.6) {0\\1};
\node[right =3mm of b] (bb) {with~probability~$p$\\with~probability~$1-p$};
\draw (l.south)--(b.north);
\draw (r.south)--(b.north);
\end{scope}
\begin{scope}[xshift=10cm]
\node[above] (l) at (-1,1) {1\\1\\0};
\node[above] (r) at (1,1) {1\\0\\1};
\node[below] (b) at (0,-.6) {0 \\1};
\node[right =3mm of b] (bb) {with~probability~$1-q$\\with~probability~$q$};
\draw (l.south)--(b.north);
\draw (r.south)--(b.north);
\end{scope}
\end{tikzpicture}
\caption{The probabilistic cellular automaton (PCA) $A_{p,q}$.}\label{figure-A}
\end{figure}

Formally, we take $A_{p,q}$ to be the operator on the set of
distributions on $\{0,1\}^\Z$ representing the action of
the PCA; if $\mu$ is the distribution of a configuration in
$\{0,1\}^\Z$, then $A_{p,q}\mu$ is the distribution of the
configuration obtained by performing one update step of the
PCA. A \df{stationary distribution} (or \df{invariant distribution}) of a PCA $F$ is a
distribution $\mu$ such that $F\mu=\mu$. (More generally,
$\mu$ is \df{$k$-periodic} if $F^k\mu=\mu$, and
\df{periodic} if it is $k$-periodic for some $k\geq 1$.) A
PCA is said to be \df{ergodic} if it has a unique
stationary distribution and if from any initial
distribution, the iterates of the PCA converge to that
stationary distribution (in the sense of convergence in
distribution with respect to the product topology).

The PCA $A_{p,q}$ has already been studied from a number of different perspectives.
It is closely related to the enumeration of directed
lattice animals, which are classical objects in combinatorics.
More precisely, if $\mu$ is an invariant distribution of $A_{p,q}$, then its marginals satisfy
the same recursions as the counting series of directed animals on the square lattice, enumerated according to their area and perimeter.
The link was originally made by Dhar \cite{dhar}, and subsequent work
includes \cite{bousquet, leborgne} -- see also Section 4.2 of the survey of
Mairesse and Marcovici \cite{mairesse_tcs} for a short introduction.

It is quite easy to show that the percolation game has positive probability of a draw if and only if $A_{p,q}$ is non-ergodic (see Proposition \ref{prop-draws} below).
It is also easy to see that $A_{p,q}$ is ergodic
whenever $p+q$ is sufficiently large.
The question of whether $A_{p,q}$
is ergodic for \emph{all} $p$ and $q$ has been mentioned as an open problem by several authors -- see in particular
\cite{dobrushin}, as well as discussions in
\cite{leborgne} and \cite{mairesse_tcs}.

%
%

PCA that are defined on $\Z$ and whose alphabet and
neighbourhood are both of size $2$ are sometimes called
\emph{elementary} PCA. A variety of tools have been
developed to study their ergodicity. Under the additional
assumption of left-right symmetry of the update rule, these
PCA are defined by only three parameters: the probabilities
to update a cell to state $1$ if its neighbourhood is in
state $00$, $11$, or $01$ (which is the same as for $10$).
Existing methods can be used to handle more than $90\%$ of
the volume of the cube $[0,1]^3$ defined by this parameter
space, but when $p$ and $q$ are small, the PCA $A_{p,q}$
belongs to an open domain of the cube where none of the
previously known criteria hold~\cite[Chapter 7]{dobrushin}.

We now state our first main result.
\begin{thm}
\label{thm:2d} If $p>0$ or $q>0$ then the PCA $A_{p,q}$ is ergodic,
and the probability of a draw is zero for the percolation game on $\Z^2$.
\end{thm}

We prove ergodicity by considering the \df{envelope} PCA corresponding to
$A_{p,q}$, which is a PCA with an expanded alphabet $\{0,\ques, 1\}$.
The envelope PCA corresponds to the status of the game started from each
site (with the symbols $0$, $\ques$ and $1$ corresponding to wins, draws and
losses respectively).  An evolution of the envelope PCA can be used to encode
a coupling of two copies of the original PCA, with a $\ques$ symbol denoting
sites where the two copies disagree.  We introduce a new method involving a
positive weight assigned to each $\ques$ symbol (whose value depends on the
states of nearby sites). The correct choice of weights is delicate and
non-obvious. We show that if the process is translation-invariant, then the
average weight per site strictly decreases under the evolution of the
envelope PCA, unless it is 0. It follows that any translation-invariant
stationary distribution for the envelope PCA has no $\ques$ symbols, with
probability 1, and from this we will be able to deduce that the game has no
draws with probability 1, so that the original PCA is ergodic. Although the
proof of ergodicity could be phrased so as not to refer to games, the notion
is useful as a semantic tool and a guide to intuition.



In the particular case $q=0$ (corresponding to the trapping game),
it was already known that the PCA has an invariant distribution
which is Markovian in space
which has made it
possible to compute the generating function of directed animals enumerated according to their area alone).
This case also has strong connections to the hard-core
lattice gas model in statistical physics (which has various applications, for example to the modeling of communications networks)
-- see Section \ref{sec:hardcore} of this paper. The
case $(p,q)=(1/2,0)$ in particular relates to the measure of maximal
entropy of the golden mean subshift in dynamical systems -- see
\cite{eloranta} and also~\cite{mairesse_marcovici_IJFCS17}.
A link between the hard-core PCA $A_{p,0}$ and
the trapping game was already mentioned by \cite{leborgne}.
As far as we know, the ergodicity of $A_{p,0}$ has not
been previously observed. It is a particular case of our
Theorem \ref{thm:2d}, but it also follows from
simpler methods which are a special case of those discussed
in Section \ref{sec:converse}.

Indeed, for the trapping game, combining the ergodicity
of $A_{p,0}$ with the Markovian description of the
invariant distribution permits
an explicit description of the distribution of game outcomes along a
diagonal, as a Markov chain.
Consequently, we show that the probability that
the first player wins the trapping game is
\begin{equation}\label{win-p}
\frac{1-2p+\sqrt{\frac{p}{4-3p}}}{2(1-p)}.
\end{equation}
See Figure \ref{curve} for a plot of this winning probability against $p$.
The probability is greater than $1/2$ if and only if $p\in(0,1/3)$,
and its maximum value is $4-2\surd 3=0.5358...$, attained at
$p=(2-\surd 3)/3=0.0893...$.

These methods seemingly do not extend to the case of positive $q$,
and we do not know
an explicit expression for the win probability
(even for the case of the target game, where $p=0$ and $q>0$).
Extending further, for a mis\`ere version of the
trapping game
(see Question \ref{subsection:misere} in the final section
of the paper) there is apparently no similar connection to a PCA
with alphabet $\{0,1\}$, and we do not
have a proof that the probability of a draw is 0.
This is somewhat reminiscent
of the situation for sums of combinatorial games~\cite{onag}, where the
well-developed theory of ``normal play'' games extends only in very limited
cases to their mis\`ere cousins.
%

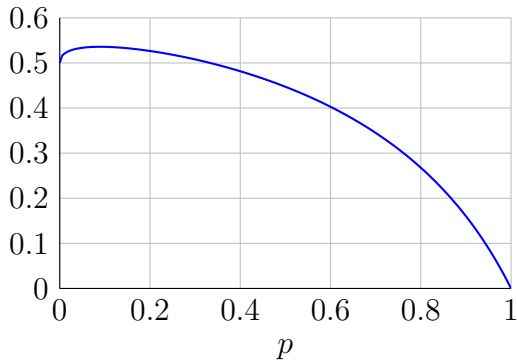
\begin{figure}[htb]
\centering
\begin{tikzpicture}
\begin{axis}[grid=major,major tick style = transparent,axis lines*=left, xlabel={$p$},
axis line style={-},xmin=0,xmax=1,ymin=0,ymax=.6,ytick={0,0.1,...,0.7},x=6cm,y=6cm]
\pgfplotsset{samples=200}
 \addplot[thick, domain=0:0.999,smooth,blue]
(\x,{(1-2*\x+sqrt(\x/(4-3*\x)))/(2-2*\x)});
\end{axis}
\end{tikzpicture}
\caption{The probability that the first player wins the trapping game with $q=0$,
conditional on the origin being open, as a function of the density $p$ of traps.
}
\label{curve}
\end{figure}

\subsection{The trapping game in higher dimensions and the hard-core model}
\label{sec:introhardcore}
We now consider the particular case $q=0$, and
explore extensions of
the trapping game, described above for $\Z^2$, to more
general directed graphs and in particular to lattices in
higher dimensions. Theorem \ref{thm:2d} tells us that in
two dimensions, the probability of a draw is $0$ for all
positive $p$, but we find a very different picture in three
and higher dimensions.

Let $G=(V,E)$ be a locally finite directed graph.
For $x\in V$, let $\Out(x)$ and $\In(x)$ be the
sets of out-neighbours and in-neighbours of $x$ respectively.
For the trapping game on $G$,
let each vertex $x$ be
a trap with probability $p$ and open
with probability $1-p$,
independently for different vertices.
A token starts at some vertex, and the two players
move alternatively; if the token is currently at $x$, a
move consists of moving it to any vertex in $\Out(x)$.
The token is only allowed to move to open sites; if
all the vertices in $\Out(x)$ are traps, then the player
to move loses the game.

For graphs $G$ with an appropriate structure, we develop a
connection to the hard-core model on a related undirected
graph in one fewer dimensions, to obtain a criterion under
which the game is drawn with positive probability.

For an undirected graph with vertex set $W$, and any $\lambda>0$, a \df{Gibbs
distribution} for the \df{hard-core model} on $W$ with \df{activity}
$\lambda$ is a probability distribution on configurations $\eta\in\{0,1\}^W$
such that
\begin{equation}\label{hardcoredef}
\P\Bigl(\eta(v)=1 \, \Bigm| \, (\eta(w): w\ne v)\Bigr) =
\begin{cases}
\displaystyle\frac{\lambda}{1+\lambda},
&\parbox{8em}{if $\eta(w)=0$ for all\\ neighbours $w$ of $v$;}\\[1em]
0,
&\text{otherwise.}
\end{cases}
\end{equation}
Any such Gibbs distribution is concentrated on configurations $\eta$ that
correspond to independent sets, in the sense that no two neighbouring
vertices $v$ and $w$ have $\eta(v)=\eta(w)=1$. If $W$ is finite, then there
is a unique Gibbs distribution, which is the probability distribution that
puts weight proportional to $\prod_{v\in W}\lambda^{\eta(v)}$ on each
configuration $\eta$ that is supported on an independent set. However, for
infinite graphs, there may be multiple Gibbs distributions. A well-known
example is the lattice $\Z^d$ with nearest-neighbour edges. For $d=1$, there
is a unique Gibbs distribution for all activities $\lambda$. However, for
$d\geq 2$, there exist multiple Gibbs distributions when $\lambda$ is
sufficiently large \cite{dobrushin65}.

Returning to the trapping game on a directed graph $G$, we now give the
key assumptions on $G$ that are required for our dimension reduction method.
Suppose there is a partition $(S_k: k\in \Z)$ of the vertex set $V$ of $G$,
and an integer $m\geq 2$, such that the following conditions hold.
\begin{samepage}
\begin{itemize}
\item[(A1)] For all $x\in S_k$, we have $\Out(x)\subset
    S_{k+1}\cup\cdots \cup S_{k+m-1}$.
\item[(A2)] There is a graph automorphism $\phi$ of $G$
    that maps $S_k$ to $S_{k+m}$ for every $k$, and such
    that $\Out(x)=\In(\phi(x))$ for all $x$.
\end{itemize}
\end{samepage}

Then let $D_k$ be the graph with vertex set
$S_k\cup\dots\cup S_{k+m-1}$, with an undirected edge
$(x,y)$ whenever $(x,y)$ is a (directed) edge of $V$.
(Below for convenience we will also use $D_k$ to denote the
vertex set $S_k\cup\dots\cup S_{k+m-1}$.) It is
straightforward to show that under conditions (A1) and
(A2), the graphs $D_k$ are isomorphic to each other for all
$k\in\Z$ (see Lemma \ref{lemma:Dk}); write $D$ for a
generic graph which is isomorphic to any of the $D_k$.
Observe that $D$ is an $m$-partite graph.
We have the following criterion for positive probability of
draws.

\begin{thm}
\label{thm:hardcoredraw} Suppose that the directed graph $G$ satisfies (A1)
and (A2).
If there exist multiple Gibbs distributions for the hard-core model
on $D$ with activity $\lambda$, then the trapping game on $G$ with
$p=1/(1+\lambda)$ has positive probability of a draw from some vertex.
\end{thm}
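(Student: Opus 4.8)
The plan is to exhibit, with positive probability, a vertex whose game status in the infinite game on $G$ is genuinely ambiguous, by resolving it differently under two ``boundary conditions at infinity''. For $N\geq m$ let $\Gamma_N$ be the finite game on $S_0\cup\dots\cup S_N$ in which a boundary condition $\beta$ is imposed on the top $m-1$ slabs $S_{N-m+2}\cup\dots\cup S_N$ (each site there declared a win or a loss), and the status of every site of $S_0\cup\dots\cup S_{N-m+1}$ is computed by the usual recursion --- an open site is a loss iff none of its out-neighbours is a loss, and a win iff one of its out-neighbours is a loss --- which is well defined by (A1) and terminates because $G$ is locally finite, so every site of $\Gamma_N$ is a win or a loss. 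The first step is the elementary observation that a vertex is a loss (respectively a win) in the infinite game iff it has a \emph{finite} witnessing tree, hence iff for all large $N$ and \emph{every} $\beta$ the game $\Gamma_N$ declares it a loss (respectively a win); consequently, if for two boundary conditions $\beta^1\neq\beta^2$ the games $\Gamma_N$ declare some fixed vertex $v$ differently for infinitely many $N$, then $v$ is drawn in the infinite game. By a standard measure-theoretic argument it therefore suffices to produce $\beta^1,\beta^2$ and a vertex $v$ with $\P(v\text{ is a loss in }\Gamma_N\text{ under }\beta^1)\neq\P(v\text{ is a loss in }\Gamma_N\text{ under }\beta^2)$ for infinitely many $N$, both games being played on the same percolation configuration.

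The link to the hard-core model is that the recursion above realises, slab by slab, the hard-core conditional probabilities \eqref{hardcoredef}. Take $\beta$ to encode a hard-core configuration (``loss'' $=$ ``occupied'') and set $\lambda=(1-p)/p$, so $p=1/(1+\lambda)$. Since $D$ has no edge inside a single slab and every edge of $G$ joins slabs fewer than $m$ apart --- so that any $m-1$ consecutive slabs separate those below from those above --- one computes the loss-set of $\Gamma_N$ from the top down and finds that each freshly exposed site becomes a loss with conditional probability $1-p=\lambda/(1+\lambda)$ when none of its already-exposed out-neighbours is a loss, and with probability $0$ otherwise, and that a site all of whose out-neighbours are closed is forced to be a loss --- which is exactly \eqref{hardcoredef}, the last case matching an isolated occupied vertex. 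Using this together with (A2) and Lemma \ref{lemma:Dk}, the loss-set of $\Gamma_N$ restricted to the fixed copy $D_0\cong D$ converges as $N\to\infty$, along appropriate subsequences and by monotonicity of the hard-core model in its boundary condition (the same monotonicity underlying convergence of the finite simulations in Figure \ref{sim}), to a hard-core Gibbs measure on $D$ with activity $\lambda$; and as $\beta$ runs over all hard-core boundary conditions one reaches, in this way, every extremal Gibbs measure. (Equivalently, one shows that the natural higher-dimensional analogue of the PCA $A_p$ has the hard-core Gibbs measures on $D$ among its stationary distributions, and then invokes the $\ques$-coupling of two copies exactly as for Theorem \ref{thm:2d}.)

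To conclude: by hypothesis the hard-core model on $D$ at activity $\lambda$ has at least two Gibbs measures, hence two that are limits of finite-volume measures under boundary conditions $\beta^1,\beta^2$ and that differ, so for some vertex $v\in D_0$ the limiting probability of occupation at $v$ differs between them; this gives $\P(v\text{ loss in }\Gamma_N\text{ under }\beta^1)\neq\P(v\text{ loss in }\Gamma_N\text{ under }\beta^2)$ for all large $N$ in a suitable subsequence, hence (by the first paragraph) $v$ is drawn in the infinite game on $G$ with positive probability, proving the theorem.

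The main obstacle is the correspondence of the second paragraph: one must verify precisely that the one-sided, top-down recursion for the loss-set reproduces the two-sided equilibrium structure of the hard-core model on $D$. The delicate point is that exposing a slab $S_k$ conditions on the out-neighbours of its sites --- which are exactly the neighbours lying in the already-revealed slabs $S_{k+1},\dots,S_{k+m-1}$, giving the clean factor $\lambda/(1+\lambda)$ or $0$ --- whereas one must show that the in-neighbours, lying in the not-yet-revealed slabs below, impose no additional constraint; this is where the separating-slab property forced by (A1) and the homogeneity (A2), via Lemma \ref{lemma:Dk}, are essential, and it is also the place where the construction is special to graphs satisfying (A2): the automorphism required by (A2) fails to exist for the standard oriented lattice $\Z^d$, which is the ``fundamental obstacle'' referred to in the introduction. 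A secondary issue is the passage $N\to\infty$: monotonicity of the hard-core model in its boundary condition is needed both to obtain convergence along subsequences and to guarantee that two boundary conditions with distinct infinite-volume limits --- and hence distinct occupation probabilities at some vertex of $D_0$ --- can be found whenever the Gibbs measure is non-unique.
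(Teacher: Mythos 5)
Your overall architecture matches the paper's: you identify the top-down slab recursion with hard-core Glauber updates of the vertex classes of $D$ (using (A1) to ensure that only out-neighbours, all in already-revealed slabs, enter the update, and Lemma~\ref{lemma:Dk} to transport everything to a fixed copy of $D$), and you convert boundary-dependence of the finite-volume outcome into a positive draw probability via the finite-witness observation (a non-drawn vertex has a strategy winning in a bounded number of moves, so its status stabilises for large $N$ independently of $\beta$). Both of these are exactly the paper's two main steps. However, there is a genuine gap in the middle: you realise the two Gibbs measures as $N\to\infty$ limits of the dynamics started from \emph{deterministic} boundary conditions $\beta^1,\beta^2$, appealing to ``monotonicity of the hard-core model in its boundary condition'' to get convergence and to reach every extremal Gibbs measure. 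This is not available at the stated level of generality. First, the hard-core model is monotone only after the bipartite flip trick, and $D$ need not be bipartite (it is the triangular lattice in Example~\ref{game4} with $d=3$, and the theorem is stated for arbitrary $m$-partite $D$). Second, and more fundamentally, what you need to converge is not a standard finite-volume Gibbs measure but the law of a one-sided sequence of Glauber sweeps started from $\beta$; proving that such dynamics converge to a prescribed Gibbs measure is essentially the ergodicity-type question the whole paper is grappling with, and it does not follow from finite-volume Gibbs theory. Your parenthetical alternative (``the Gibbs measures are stationary for the higher-dimensional PCA'') points at the right fix but is not carried out.

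The paper's resolution is to make the boundary condition \emph{random}: take $(\gamma(x):x\in S_K\cup\cdots\cup S_{K+m-1})$ distributed as a Gibbs measure $\mu$ (independent of the percolation configuration below), so that by exact invariance of $\mu$ under each class update, $\sigma_k$ has law \emph{exactly} $\mu$ for every $k\le K$ --- no limit, no monotonicity, no bipartiteness. Alternating between $\mu$ and $\nu$ as $K\to\infty$ then gives a sequence $\sigma_0^{(K)}$ whose laws have two distinct limit points, hence no almost-sure convergence, which contradicts zero draw probability by your own first paragraph. A secondary point: your reduction ``it suffices that the two loss-probabilities differ for infinitely many $N$'' is too weak as stated, since under the no-draw hypothesis both sequences converge to the common limit $\P(v\text{ is a loss})$ and could still differ along the way; what you need (and what distinct Gibbs limits would give you) is that the difference stays bounded away from zero along a subsequence.
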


The simplest case in which to understand the conditions (A1) and (A2) is when
$G$ is the directed lattice $\Z^2$, with $\Out(x)=\{x+(1,0), x+(0,1)\}$ (the
setting of Theorem \ref{thm:2d} in Section \ref{sec:intro2d}). Then we may
take the partition of $\Z^2$ into Northeast-Southwest diagonals given by
$S_k:= \{(x_1,x_2): x_1+x_2=k\}$, along with the bijection $\phi(x)=x+(1,1)$,
and $m=2$. The graph $D$ then consists of the vertices of two successive
diagonals, and is thus isomorphic to the line $\Z$. (In the context of PCA,
$D$ is sometimes called the \emph{doubling graph}.)

We don't know whether the converse statement to Theorem \ref{thm:hardcoredraw}
holds in general -- i.e.\ whether uniqueness of the Gibbs measure on $D$
implies probability $0$ of a draw. In the case $m=2$, this converse
statement does indeed hold -- see
Section \ref{sec:converse} for discussion.

As noted above, there is a unique Gibbs distribution for the hard-core
model on $\Z$ for all $\lambda>0$. In this case $m=2$, and so the converse
statement to  Theorem~\ref{thm:hardcoredraw} says that
there are no draws
for any $p\in(0,1)$. This gives an alternative (and perhaps simpler) proof of Theorem \ref{thm:2d} in the
special case $q=0$.

In higher dimensions the picture is different. We will give several examples
of relevant graphs in Section \ref{sec:hardcore-general} and Theorem
\ref{thm:higherd} below. For the current discussion, consider the case where
$G$ has vertex set $\Z^d_{\text{even}}:=\{x\in\Z^d: \sum x_i \text{ is
even}\}$, with directed edges given by $\Out(x):=\{x\pm e_i+e_d: 1\leq i\leq
d-1\}$ (where $e_i$ is the $i$th standard basis vector in $\Z^d$). So
$\Out(x)$ has size $2(d-1)$; any move of the game increases the $d$th
coordinate by $1$ and also changes exactly one of the other coordinates by
$1$ in either direction. In two dimensions, this game is isomorphic to the
original game on $\Z^2$. For general $d$, conditions (A1) and (A2) hold with
$m=2$ if we set $S_k=\{x\in\Z^d_\text{even}:x_d=k\}$ and $\phi(x)=x+2e_d$.
One then finds that $D$ is isomorphic to the standard $(d-1)$-dimensional
cubic lattice $\Z^{d-1}$ with nearest-neighbour edges. As mentioned above,
there are multiple Gibbs distributions for the hard-core model on $\Z^{d-1}$
whenever $d\geq 3$ and $\lambda$ is large enough; then Theorem
\ref{thm:hardcoredraw} tells us that the trapping game on $G$ has positive
probability of a draw when $p$ is sufficiently small.  We do not know whether
the draw probability is monotone in $p$, nor even whether it is supported on
a single interval (giving a single critical point).

To prove Theorem \ref{thm:hardcoredraw}, we consider a recursion, analogous
to the earlier PCA, expressing game outcomes starting from vertices in $S_k$
in terms of outcomes starting in $S_{k+1}\cup\cdots\cup S_{k+m}$.  Via the
graph isomorphism from $D_k$ to $D$, the iteration of this recursion can be
reinterpreted as a version of Glauber dynamics for the hard-core model on
$D$.  If the hard-core model has multiple Gibbs distributions, then they
correspond to multiple stationary distributions for the recursion on $G$, and
from this we will deduce that draws occur.



Unfortunately, the following very natural example is \emph{not} amenable to
our methods. Let $G$ be the standard cubic lattice $\Z^d$ with edge
orientations given by $\Out(x)=\{x+e_i: 1\leq i\leq d\}$.  Theorem
\ref{thm:hardcoredraw} does not apply for $d\geq3$, because there is no
choice of $m$ and the automorphism $\phi$ such that (A2) holds. We conjecture
that, nonetheless, the trapping game has positive probability of a draw
whenever $p$ is sufficiently small.

\subsection{Further background}

The celebrated \emph{positive rates conjecture} is the assertion that in one
dimension, any finite-state finite-range PCA is ergodic, provided the
transition probability to any state given any neighbourhood states is
positive (the ``positive rates'' condition).
  This contrasts with two and higher
dimensions, where for example
Glauber dynamics for
the low-temperature Ising
model are well known to be non-ergodic.  Despite persuasive
heuristic arguments in favour of the positive rates
conjecture, G{\'a}cs \cite{gacs} has presented an extremely
complicated one-dimensional PCA refuting it. (See also
\cite{gray}.) However, it is still natural to hypothesize
that all ``sufficiently simple'' one-dimensional PCA with
positive rates are ergodic.

The PCA $A_{p,q}$ satisfies the positive rates condition
whenever both $p$ and $q$ are strictly positive.
If $p=0$ or $q=0$, although we no longer have positive rates,
similar but weaker conditions do hold;
$\{0,1\}^n$ has positive probability of yielding any word
in $\{0,1\}^{n-2}$ after \emph{two} steps of the evolution.
In light of this and the above remarks, it would have been
very surprising if these PCA were not ergodic. Nonetheless,
\emph{proving} ergodicity is often very difficult, even in
cases where it appears clear from heuristics or
simulations.

Another case in point is the notorious \emph{noisy
majority} model on $\Z^d$. Here, a configuration is an
element of $\{0,1\}^{\Z^d}$.  The update rule is that with
probability $1-p$, a site adopts the more popular value in
$\{0,1\}$ among itself and its $2d$ neighbours; with
probability $p$ it adopts the other value.  In dimensions
$d\geq 2$ it is expected that this PCA should behave
similarly to the Ising model: it should be ergodic for $p$
sufficiently close to $1/2$, and non-ergodic for $p$
sufficiently small, with a unique critical point separating
the two regimes. However, proving any of this appears very
challenging. See e.g.\ \cite{majority,gray} and the
references therein for more information. One key difficulty
with the noisy majority model is the lack of reversibility
of the
dynamics (in contrast to the Glauber dynamics for the
Ising model, for example). This can be compared to the
difficulty of obtaining a result like
Theorem \ref{thm:hardcoredraw} in the absence
of conditions such as (A1) and (A2); see the discussion
above at the end of Subsection \ref{sec:introhardcore}.



In a different direction, a variant of the notion of envelope cellular
automata has recently been combined with percolation ideas in
\cite{gravner-holroyd}, to prove the surprising fact that certain
deterministic one-dimensional cellular automata exhibit order from
\emph{typical} finitely supported initial conditions, but disorder from
exceptional initial conditions.

\subsection{Organization of the paper}
In Section \ref{sec:main} we explain the link between the
PCA $A_{p,q}$ and the percolation game
in $\Z^2$. We also establish several
basic results concerning monotonicity and
ergodicity. The local weighting on configurations is
introduced in Subsection \ref{sec:weighting}, and the proof
of ergodicity is then given in Subsection
\ref{sec-proof-ergodicity}.

The relation between the trapping game and
the hard-core model is then developed in
Section \ref{sec:hardcore}.  We start by considering the
case of $\Z^2$ where the ideas are simplest,
and in particular we will derive the formula \eqref{win-p} for the winning probability. The case of a more general graph is then treated in Subsection
\ref{sec:hardcore-general}, where Theorem
\ref{thm:hardcoredraw} is proved.
The converse to Theorem \ref{thm:hardcoredraw}
is discussed in Subsection \ref{sec:converse}.
In Subsection
\ref{sec:d3plus} and Theorem \ref{thm:higherd},  we give a
variety of examples of the application of
Theorem~\ref{thm:hardcoredraw} to graphs with vertex set
$\Z^d$ for $d\geq 3$, for which the role of the doubling
graph $D$ is played by various lattice structures. We also
give an extension of Theorem \ref{thm:hardcoredraw} in
Proposition~\ref{prop:extendedhardcoredraw} in Subsection
\ref{sec:hardcore-extension}, using a variant form of the
correspondence to the hard-core model.

We conclude in Section~\ref{sec:open} with some open problems.

\section{Percolation games and probabilistic cellular automata}
\label{sec:main}
\subsection{The PCA for the percolation game}

%
%
%

\label{sec:PCAdef} Consider the percolation game on $\Z^2$ as defined in the
introduction.

\newcommand{\W}{\mathrm{W}}
\renewcommand{\L}{\mathrm{L}}
\newcommand{\D}{\mathrm{D}}

Suppose $x$ is an open site of $\Z^2$. Let $\eta(x)$ be
$\W$, $\L$ or $\D$ according to whether the game started
with the token at $x$ is win for the first player, a loss
for the first player, or a draw, respectively. (Recall that
we assume optimal play, with the players able to see entire
percolation configuration when deciding on
their strategies). If $x$ is a trap, it is
convenient to set $\eta(x)=\W$ (we can imagine that a
player is allowed to move the token to $x$, but with the
effect that the game is then declared an immediate win for
the opponent); similarly if $x$ is a target then we set
$\eta(x)=\L$.

Recall that $\Out(x)=\{x+e_1, x+e_2\}$ is the set of sites to which the token
can move from $x$. By considering the first move, we have the following
recursion for the status of the sites:
\begin{equation}\label{omegarecursion}
\begin{aligned}
\text{$x$ a trap}\;\Rightarrow\;\eta(x)&=\W;\\
\text{$x$ a target}\;\Rightarrow\;\eta(x)&=\L;\\
\text{$x$ open}\;\Rightarrow\;
\eta(x)&=\begin{cases}
\L&\text{if }\eta(y)=\W \text{ for all }y\in \Out(x)\\
\W&\text{if }\eta(y)=\L \text{ for some }y\in \Out(x)\\
\D&\text{otherwise.}
\end{cases}
\end{aligned}
\end{equation}

For $k\in\Z$, let $S_k$ be the set $\{x=(x_1, x_2)\in\Z^2: x_1+x_2=k\}$, a
NW-SE diagonal of $\Z^2$. The recursion (\ref{omegarecursion}) gives us the
values $(\eta(x): x\in S_k)$ in terms of the values $(\eta(x): x\in S_{k+1})$
together with the information about which sites in $S_k$ are traps.

It is important to note that it is not \emph{a priori}
clear whether the recursion \eqref{omegarecursion} suffices
to determine $\eta$ uniquely from the percolation configuration.  Indeed, in the trivial case $p=q=0$ when
all sites are open, we have $\eta(x)=\D$ for all $x$, but
\eqref{omegarecursion} has other solutions: one is to
 set $\eta'(x)$ equal to $\L$ or $\W$ according to
whether $x_1+x_2$ is odd or even.  Such considerations are
in fact central to many of our arguments. One way to
interpret our main result, Theorem~\ref{thm:2d}, is as saying
that \eqref{omegarecursion} does have a unique solution
almost surely whenever $p$ or $q$ is positive.
In contrast, for the higher dimensional
variants considered later, the analogous recursions admit
multiple solutions for certain parameter values.

Via \eqref{omegarecursion}, we can regard the
configurations on successive diagonals $S_k$, as $k$
decreases, as successive states of a one-dimensional PCA.
Let us introduce the following recoding:
 \[\W=0,\quad \L=1,\quad \D=\ques.\]
 (In the coupling arguments below, the symbol $\ques$ will be interpreted as
  marking a site at which the value is ``unknown''.  The choice to assign $\W=0$ and $\L=1$,
rather than the other way around, say, will be important for the later
connection with hard-core models.) The PCA evolves as follows: given the
values for sites in $S_{k+1}$, each value $\eta(x)$ for $x\in S_k$ is derived
independently using the values $\eta(x+e_1)$ and $\eta(x+e_2)$, according to
the scheme given in Figure~\ref{figure-F} (where a $*$ represents an
arbitrary symbol in $\{0,\ques, 1\}$).
%
%
{\begin{figure}
\centering
\begin{tikzpicture}[scale=.6,every text node part/.style={align=left},framed]
\begin{scope}
\node[above] (l) at (-1,1) {\oo};
\node[above] (r) at (1,1) {\oo};
\node[below] (b) at (0,-.6) {\oo\\\ii};
\node[right =2mm of b] (bb) {w.~prob.~$p$\\w.~prob.~$1-p$};
\draw (l.south)--(b.north);
\draw (r.south)--(b.north);
\end{scope}
\begin{scope}[xshift=7cm]
\node[above] (l) at (-1,1) {$*$ \\ \ii};
\node[above] (r) at (1,1) {\ii \\ $*$};
\node[below] (b) at (0,-.6) {\oo \\ \ii};
\draw (l.south)--(b.north);
\draw (r.south)--(b.north);
\node[right =3mm of b] (bb) {w.~prob.~$1-q$\\w.~prob.~$q$};
\end{scope}
\begin{scope}[xshift=15cm]
\node[above] (l) at (-1,1) {{\qq}\\\oo\\ {\qq}};
\node[above] (r) at (1,1) {\oo\\ {\qq}\\ {\qq}};
\node[below] (b) at (0,-.6) {\oo\\{\qq}\\ \ii};
\draw (l.south)--(b.north);
\draw (r.south)--(b.north);
\node[right =3mm of b] (bb) {w.~prob.~$p$\\w.~prob.~$r=1-p-q$\\ w.~prob.~$q$};
\end{scope}
\end{tikzpicture}
\caption{The PCA $F_{p,q}$ ($*$ denotes an arbitrary symbol).}\label{figure-F}
\end{figure}
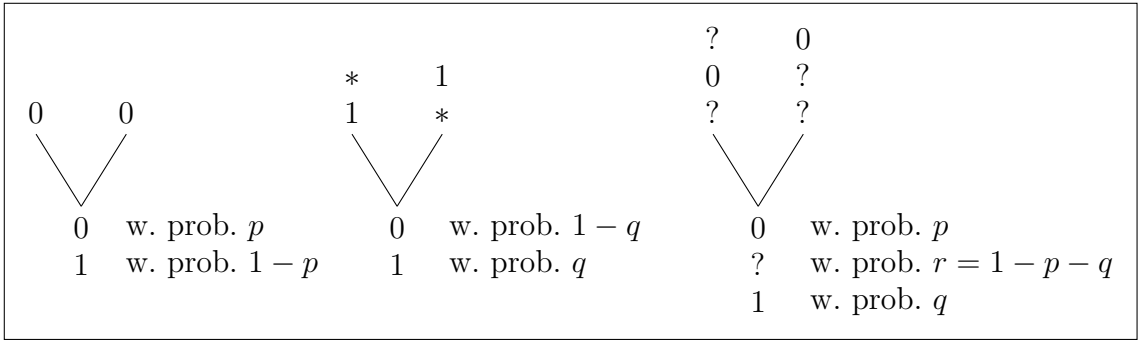
}

We denote the corresponding PCA $F_{p,q}$. Although we have defined it as a
process in the plane, we can also regard it as a PCA on $\Z$ with a
configuration in $\{0,\ques, 1\}^\Z$ evolving in time by setting
\begin{equation}\label{spacetime}
\eta_t(n)=\eta\bigl((-t-n, n)\bigr).
\end{equation}
(Here we have made the arbitrary choice to offset leftward as time
increases,so that the PCA rule gives $\eta_{t+1}(n)$ in terms of
$\eta_{t}(n)$ and $\eta_{t}(n+1)$.) As in Section~\ref{sec:intro2d}, formally
we take $F_{p,q}$ to be an operator on the set of distributions on $\{0,\ques,
1\}^\Z$ representing the action of the PCA.

In the setting of the percolation game, translation invariance of the whole
process on $\Z^2$ implies that the distribution of the configuration on the
diagonal $S_k$ does not depend on $k$; that is, the distribution of
$\big(\eta((k-n, n)): n\in\Z\big)$ does not depend on $k$ and is a stationary
distribution of $F_{p,q}$.  In addition, this distribution is itself invariant under the
action of translations of $\Z$.

We next note two useful monotonicity properties for the PCA
$F_{p,q}$.  In terms of the game, they have natural interpretations: (i) an advantage for one player translates to a disadvantage for the other; and (ii) declaring draws at some positions can only result in more draws elsewhere.
\begin{samepage}
\begin{lem}
\label{lemma:monotonicity} Let $\mu$ and $\nu$ be
probability distributions on $\{0,\ques,1\}^\Z$.
\begin{itemize}
\item[(i)] If $\mu\leq\nu$, where $\leq$ denotes
    stochastic domination with respect to the
    coordinatewise partial order induced by $0<\ques<1$,
    then $F_{p,q}\mu\geq F_{p,q}\nu$.  (Note the reversal of the
    inequality).
\item[(ii)] If $\mu\altpreceq\nu$, where $\altpreceq$
    denotes stochastic domination with respect to the
    coordinatewise partial order induced by
    $0\altless\ques\altgreater 1$, then $F_{p,q}\mu\altpreceq
    F_{p,q}\nu$.
\end{itemize}
\end{lem}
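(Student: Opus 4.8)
The plan is to verify monotonicity at the level of the one-site update rule of $F_p$ and then lift it to the PCA by a routine coupling. Recall from Figure~\ref{figure-F} that $F_p$ produces $\eta_{t+1}(n)$ from the pair $\bigl(\eta_t(n),\eta_t(n+1)\bigr)$, independently over $n$, via a stochastic kernel $f$ on $E:=\{0,\ques,1\}$; writing an output law as a triple of masses at $(0,\ques,1)$, the kernel takes only three values, namely $f(0,0)=(p,0,1-p)$; $f(a,b)=(1,0,0)$ whenever $1\in\{a,b\}$; and $f(a,b)=(p,1-p,0)$ otherwise, that is, when some coordinate is $\ques$ and neither coordinate is $1$.

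For the lifting step, fix a partial order $\preceq$ on $E$. If the kernel is \emph{monotone} for $\preceq$ — meaning that $(a,b)\preceq(a',b')$ coordinatewise implies $f(a,b)$ is $\preceq$-stochastically dominated by $f(a',b')$ — then $F_p\mu\preceq F_p\nu$ whenever $\mu\preceq\nu$; and if the kernel is \emph{antitone} for $\preceq$ — $(a,b)\preceq(a',b')$ implies instead that $f(a,b)$ dominates $f(a',b')$ — then $F_p\mu\succeq F_p\nu$ whenever $\mu\preceq\nu$. Both implications follow from the same construction: take a $\preceq$-monotone coupling $(\eta,\zeta)$ of $\mu$ and $\nu$ (Strassen's theorem), and then, independently over $n$ and conditionally on $(\eta,\zeta)$, use Strassen on the finite poset $E$ to sample $\bigl(\eta'(n),\zeta'(n)\bigr)$ from a coupling of $f(\eta(n),\eta(n+1))$ and $f(\zeta(n),\zeta(n+1))$ that is $\preceq$-ordered in the relevant direction; one checks that $\eta'\sim F_p\mu$, $\zeta'\sim F_p\nu$, and that the pointwise ordering is preserved. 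Part~(ii) is the monotone case with $\preceq$ equal to $0\altless\ques\altgreater 1$; part~(i) is the antitone case with $\preceq$ equal to $0<\ques<1$, which is why the inequality in its statement is reversed.

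It then remains to check that $f$ is antitone for $0<\ques<1$ and monotone for $0\altless\ques\altgreater 1$; both are finite verifications. For~(i): $f(a,b)$ depends only on $s:=\max(a,b)$ taken in the total order $0<\ques<1$, since $s=1$ exactly when some coordinate is $1$ and $s=\ques$ exactly in the third case above; and the induced map $g$ with $g(0)=(p,0,1-p)$, $g(\ques)=(p,1-p,0)$, $g(1)=(1,0,0)$ satisfies $g(0)\ge g(\ques)\ge g(1)$ (comparing upper tails in the order $0<\ques<1$). Since $(a,b)\mapsto\max(a,b)$ is increasing, $f$ is antitone for $0<\ques<1$, as needed. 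For~(ii): the nonempty proper up-sets of $(E,\,0\altless\ques\altgreater 1)$ are $\{\ques\}$, $\{0,\ques\}$ and $\{1,\ques\}$, so $\nu_1\altpreceq\nu_2$ holds exactly when $\nu_1(0)\ge\nu_2(0)$ and $\nu_1(1)\ge\nu_2(1)$. Thus it suffices to check that replacing a single non-$\ques$ coordinate of $(a,b)$ by $\ques$ — these being exactly the covering relations of $\altpreceq$ on pairs — never increases the mass of $f$ at $0$ or at $1$. Such a replacement either leaves $f$ unchanged, or sends $f(0,0)=(p,0,1-p)$ or a value $(1,0,0)$ to $(p,1-p,0)$; in each case the mass at $0$ goes from $p$ or $1$ to $p$ and the mass at $1$ goes from $1-p$ or $0$ to $0$, both non-increasing. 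Hence $f$ is monotone for $0\altless\ques\altgreater 1$, and both parts follow.

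The only point that is not purely mechanical is that the order $0\altless\ques\altgreater 1$ in~(ii) is not total, so the two updated configurations cannot be coupled through a single shared uniform variable at each site (as they can in~(i)); one therefore invokes Strassen's theorem, or equivalently writes down by hand the monotone coupling of two distributions on $(E,\,0\altless\ques\altgreater 1)$, which is immediate from the two constraints above. All remaining work is bookkeeping over the three-letter alphabet.
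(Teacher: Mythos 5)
Your proof is correct, and at bottom it is the same coupling argument as the paper's, just routed through heavier machinery: you verify stochastic antitonicity (resp.\ monotonicity) of the one-site kernel for each partial order and then invoke Strassen's theorem twice to assemble an ordered coupling site by site. The paper's proof is more direct: the only randomness in one step of $F_p$ is the closed/open Bernoulli variable at each site of the new diagonal, so one runs both copies with the \emph{same} realization of these variables and checks straight from the recursion (\ref{omegarecursion}) that the pointwise order is reversed for $0<\ques<1$ and preserved for $0\altless\ques\altgreater 1$. This makes your closing remark inaccurate: the updated configurations \emph{can} be coupled through a single shared variable per site in case (ii) as well, because the update is a deterministic function of the neighbourhood values and the closed/open bit, and that function is $\altleq$-monotone in the neighbourhood values for each fixed value of the bit (closed gives $0=0$; open with a $1$ below gives $0=0$; open with all $0$s in the larger copy forces all $0$s in the smaller; open with a $\ques$ and no $1$ in the larger copy outputs $\ques$, which is $\altleq$-maximal). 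The obstruction you describe --- no order-preserving inverse-CDF coupling for a non-total order --- is real in general but irrelevant here, since the natural coupling is not an inverse-CDF coupling. Your kernel computations and the Strassen lifting are all sound, so the argument stands; it simply pays for generality (Strassen on the product poset, conditional site-by-site resampling) that the shared-randomness coupling gets for free.
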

\end{samepage}
\begin{proof}
We can use the recursion (\ref{omegarecursion}) to give a coupling of a
single step of the PCA $F_{p,q}$ started from two different configurations.
Suppose we fix values $(\eta(x): x\in S_{k+1})$ and $(\tilde{\eta}(x): x\in
S_{k+1})$, in such a way that $\eta(x)\leq \tilde{\eta}(x)$ for all $x\in
S_{k+1}$ (where $\leq$ is the coordinatewise order on configurations induced
by $0<\ques<1$). Now use (\ref{omegarecursion}) to obtain values $\eta(x)$
and $\tilde{\eta}(x)$ for $x\in S_k$, using the same realization of
traps, targets, and open sites in $S_k$ in each case. It is straightforward to check that in
that case $\eta(x) \geq \tilde{\eta}(x)$ for each $x\in S_k$. Hence the
operator $F_{p,q}$ is decreasing in the desired sense.

Similarly, if $\eta(x)\altleq\tilde{\eta}(x)$ for all
$x\in S_{k+1}$, then we obtain $\eta(x)\altleq\tilde{\eta}(x)$
also for each $x\in S_k$. So in this case the operator
$F_{p,q}$ is increasing as desired.
\end{proof}

If we restrict the PCA $F_{p,q}$ to configurations that do not contain the symbol
$\ques$, we recover precisely the binary PCA $A_{p,q}$ defined in the introduction.
In the terminology of Bu{\v s}i{\'c} et al.\ \cite{busic_aap}, the PCA $F_{p,q}$
is the \df{envelope} PCA of $A_{p,q}$. A copy of the PCA $F_{p,q}$ can be used to
represent a coupling of two or more copies of the PCA $A_{p,q}$, started from
different initial conditions. The symbol $\ques$ represents a site whose
value is not known, i.e.\ one which may differ between the different copies.

Specifically, consider starting copies of the hard-core PCA $A_{p,q}$ from several
different initial conditions, represented by configurations on the
diagonal $S_k$ for some fixed $k$. As in the proof of
Lemma~\ref{lemma:monotonicity}, a natural coupling is provided by the
recursion (\ref{omegarecursion}), using the same realization of
traps, targets, and open sites in $(S_r: r<k)$.
In particular, let $k>0$ and consider three copies $\eta$, $\tilde{\eta}$
and $\eta^\ques$, with $\eta$ and $\tilde{\eta}$ started from arbitrary
initial conditions on $S_k$, while $\eta^\ques(x)=\ques$ for all $x\in S_k$
(so that $\eta^\ques$ is maximal for the ordering $\altleq$ in Lemma
\ref{lemma:monotonicity}(ii)).  Then we have that $\eta(x)\altleq
\eta^\ques(x)$ and $\tilde{\eta}(x) \altleq \eta^\ques(x)$ for all $x\in S_r$
with $r<k$.  This implies that if $\eta(x)\ne \tilde{\eta}(x)$, then
$\eta^\ques(x)=\ques$.

In terms of the game, we have the following interpretation:
if the origin $O=(0,0)$ is an open site, and
$\eta^\ques(O)=0$ (respectively $\eta^\ques(O)=1$)
then the first (respectively second) player can force
a win within at most $k$ moves of the game.


The ergodicity of an envelope PCA implies the ergodicity of
the original PCA,
but the converse is not true in general. In our case,
however, we can use the monotonicity property in Lemma
\ref{lemma:monotonicity}(i) to show that the two are
equivalent.

\begin{prop}\label{prop-ergodicity}
The PCA $F_{p,q}$ is ergodic if and only if $A_{p,q}$ is ergodic.
\end{prop}

\begin{proof}
It is clear from the definitions that if $F_{p,q}$ is ergodic, then $A_{p,q}$ is also
ergodic. Conversely, suppose that $A_{p,q}$ is ergodic.  Let $\mu$ be a
distribution on $\{0,\ques,1\}^{\Z}$, and let $\delta_{0}$ and $\delta_1$ the
distributions concentrated on the states ``all $0$s'' and ``all
$1$s''.  Then $\delta_0\leq\mu\leq\delta_1$, so by
Lemma~\ref{lemma:monotonicity}(i), for $k\geq 0$ we have either $
F_{p,q}^{k}\delta_{0}\leq F_{p,q}^{k}\mu \leq F_{p,q}^{k}\delta_{1}$ or $
F_{p,q}^{k}\delta_{0}\geq F_{p,q}^{k}\mu \geq F_{p,q}^{k}\delta_{1}$, according to
whether $k$ is even or odd.  But $F_{p,q}^k \delta_{0} =A_{p,q}^k \delta_{0}$ and $
F_{p,q}^k \delta_{1}=A_{p,q}^k\delta_{1}$, and by ergodicity of $A_{p,q}$, the latter two
sequences converge as $k\to\infty$ to the same distribution $\pi$, so
$F_{p,q}^k\mu$ also converges to $\pi$. Thus $F_{p,q}$ is also ergodic.
\end{proof}

\begin{prop}\label{prop-draws}
For each $p$ and $q$, the percolation game has probability
$0$ of a draw if and only if $A_{p,q}$ is ergodic.
\end{prop}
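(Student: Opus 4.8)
The plan is to relate the event of a draw to the behavior of the coupling described just above, and then push through the equivalence with ergodicity of $A_p$ via Proposition~\ref{prop-ergodicity}. First I would recall the construction of the three coupled copies $\eta$, $\tilde\eta$, $\eta^\ques$ on the diagonals $(S_r: r\le k)$, driven by a common realization of the open/closed sites: $\eta$ and $\tilde\eta$ are the PCA $A_p$ started from $\delta_0$ and $\delta_1$ respectively on $S_k$, while $\eta^\ques\equiv\ques$ on $S_k$. By Lemma~\ref{lemma:monotonicity}(ii) and the discussion preceding Proposition~\ref{prop-ergodicity}, for every vertex $x$ on a diagonal below $S_k$ we have that $\eta(x)\ne\tilde\eta(x)$ forces $\eta^\ques(x)=\ques$; conversely, by Lemma~\ref{lemma:monotonicity}(i) applied to $\delta_0\le\delta_\ques\le\delta_1$ (or directly from the recursion \eqref{omegarecursion}), $\eta^\ques(x)=\ques$ propagates downward as a ``don't know'' symbol exactly when the true game value at $x$ cannot yet be resolved from the information below $S_k$.

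Next I would make precise the statement that $\eta^\ques(O)=\ques$ for all $k$ is \emph{equivalent} to the game at $O$ being a draw (when $O$ is open). Recall from the interpretation in the text: if $\eta^\ques(O)=0$ then the first player forces a win within $k$ moves, and if $\eta^\ques(O)=1$ then the second player does. Since the $\eta^\ques$-value at $O$ is monotone as $k\to\infty$ (resolving from $\ques$ to $0$ or to $1$ can only happen once, by Lemma~\ref{lemma:monotonicity}(ii)), the game at $O$ is \emph{not} a draw iff $\eta^\ques(O)\in\{0,1\}$ for some $k$. Hence $\P(\text{draw at }O)>0$ iff with positive probability $\eta^\ques(O)=\ques$ for every $k$, i.e.\ iff the laws $A_p^k\delta_\ques$ fail to resolve the origin in the limit. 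By translation invariance the same holds at every vertex, so a draw occurs somewhere with positive probability iff it occurs at $O$ with positive probability.

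Finally I would tie this to ergodicity. The measure $A_p^k\delta_1$ pushed forward along the coupling is the law of $\tilde\eta$ restricted to the diagonal through $O$, and similarly $A_p^k\delta_0$ gives $\eta$; these are comparable in the $\le$-order and $\eta^\ques(O)=\ques$ precisely on the event $\eta(O)\ne\tilde\eta(O)$. Therefore $\P(\eta^\ques(O)=\ques\text{ for all }k)=0$ iff $\P_k(\eta(O)\ne\tilde\eta(O))\to0$, which is exactly the statement that $A_p^k\delta_0$ and $A_p^k\delta_1$ agree in the limit at a single site. Since $A_p$ is a monotone PCA (in the $\le$-order, after the reversal in Lemma~\ref{lemma:monotonicity}(i)) and $\delta_0$, $\delta_1$ are the extreme configurations, convergence of these two sequences to a common limit at every site is equivalent to ergodicity of $A_p$ (the sequences $A_p^{2k}\delta_0$ and $A_p^{2k}\delta_1$ are monotone and bracket every other orbit, cf.\ the proof of Proposition~\ref{prop-ergodicity}). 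Combining the three steps yields: the percolation game has zero probability of a draw iff $A_p$ is ergodic.

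The main obstacle is the careful handling of the parity issue and the ``resolution'' argument in the second paragraph: one must check that $\eta^\ques(O)$, as a function of the truncation level $k$, is genuinely monotone and that its eventual value (if any) is the true game outcome, using that $\eta^\ques$ is sandwiched between $\eta$ and $\tilde\eta$ for \emph{both} parities of $k$ via Lemma~\ref{lemma:monotonicity}(ii). The rest is bookkeeping with translation invariance and the two monotonicity statements.
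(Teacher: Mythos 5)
Your overall architecture is sound and stays close to the paper's circle of ideas. Your forward direction (ergodic implies no draws) via the coupled $\delta_0/\delta_1/\delta_\ques$ evolutions is a legitimate variant of the paper's, which instead observes that the law of the game outcomes on a diagonal is a stationary distribution for $F_p$ and invokes Proposition~\ref{prop-ergodicity}. Your passage between ``the coupled copies agree at $O$ in the limit'' and ``$A_p^k\delta_0$ and $A_p^k\delta_1$ have a common limit'' is correct, but only because of Lemma~\ref{lemma:monotonicity}(i): the two coupled configurations on $S_0$ are comparable in the $\le$ order, so the disagreement probability at a site equals the difference of the one-site marginals (for a general coupling, convergence of marginals would not give vanishing disagreement probability). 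You gesture at this with ``comparable in the $\le$-order'', but it deserves to be stated, since it is the hinge of that step; the monotone sandwiching of $A_p^{2k}\mu$ between $A_p^{2k}\delta_0$ and $A_p^{2k}\delta_1$ then upgrades the common limit to ergodicity as you say.

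The one genuine gap is in your second paragraph, where you assert that the game at $O$ is not a draw \emph{if and only if} $\eta^\ques(O)\in\{0,1\}$ for some $k$. You justify only the ``if'' direction (the true outcome configuration on $S_k$ is dominated by the all-$\ques$ seed in the $\altleq$ order, so by Lemma~\ref{lemma:monotonicity}(ii) a resolved envelope value must equal the true outcome, which is then not a draw). The ``only if'' direction --- that a non-drawn game is resolved by the envelope at some finite level --- is exactly the step the paper proves explicitly: since each position offers at most two moves, a player with a winning strategy has one that wins in fewer than $N$ moves for some a.s.\ finite random $N$ (K\"onig's lemma / finite branching), and then \emph{every} boundary condition on $S_N$ yields the true value at $O$ under \eqref{omegarecursion}, so $\eta^\ques(O)\ne\ques$ once $k\geq N$. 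Without this argument the implication ``no draws $\Rightarrow$ ergodic'' does not follow: nothing you wrote rules out a priori a winning strategy with no uniform bound on the length of play, in which case $\eta^\ques(O)$ could stay equal to $\ques$ for every $k$ even though the game is not a draw. Supply that finite-branching argument and the proof is complete.
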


\begin{proof}

If $A_{p,q}$ is ergodic then so is $F_{p,q}$,
and so the unique invariant distribution of $F_{p,q}$
has no $\ques$ symbols. But we know that
the distribution of the game outcomes along a diagonal
$S_k$ is invariant for $F_{p,q}$. Hence with probability 1,
there are no sites from which the game is drawn.

For the converse, let $\omega$ be a random percolation
configuration on $\Z^2$.
Consider any site $x\in S_0$. If the
game started from $x$ is not a draw, then (since at each
turn the player to move has only finitely many options) one
player has a strategy that guarantees a win in fewer than
$N$ moves, where $N\in\N$ is a finite random variable that
depends on $\omega$.  Consequently,
if we assign any configuration of states $0,\ques,1$ to $S_N$ and compute the
resulting states on $(S_n: 0\leq n<N)$ using the recursion
(\ref{omegarecursion}) and the percolation configuration $\omega$,
the resulting state at $x$ is the same as its state for the
percolation game on $\Z^2$ with percolation configuration $\omega$.

Let $\gamma$ be the random configuration of game outcomes
on $S_0$ arising from $\omega$.  Also, fix a distribution
$\nu$ on $\{0,1\}^{\Z}$, and let $\gamma_n$ be the
configuration on $S_0$ that results from assigning a
configuration with law $\nu$ to $S_n$, independent of
$\omega$, and applying (\ref{omegarecursion}) as described
above. By the argument in the previous paragraph, if the
probability of a draw is 0, then $\gamma_n$ converges
almost surely to $\gamma$ (in the product topology).  Hence
also the distribution of $\gamma_n$ converges to that of
$\gamma$.  But $\gamma_n$ has distribution $A_{p,q}^n\nu$, so
$A_{p,q}^n\nu$ converges as $n\to\infty$ to the distribution of
$\gamma$, which does not depend on $\nu$. Hence $A_{p,q}$ is
ergodic.
\end{proof}

\subsection{The weight function}
\label{sec:weighting} We are concerned with the PCA $A_{p,q}$ on
the alphabet $\{0,1\}$, shown in Figure \ref{figure-A},
along with its envelope PCA $F_{p,q}$ shown in Figure \ref{figure-F}.

In order to prove the ergodicity of $F_{p,q}$, we will introduce an appropriate weight on $\ques$ symbols, and prove that this weight decreases under the action of $F_{p,q}$.
The aim of the present section is to motivate the choice of that special weight system.

%
%
%
%
%

%
%
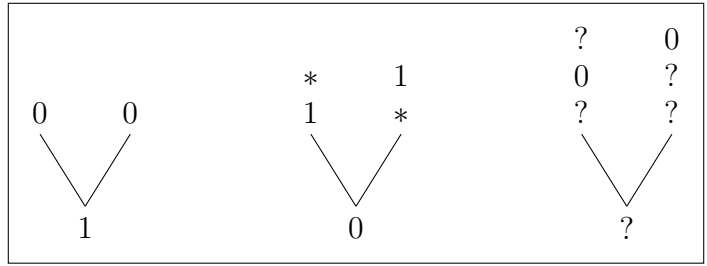
\begin{figure}
\centering
\begin{tikzpicture}[scale=.6,every text node part/.style={align=left},framed]
\begin{scope}
\node[above] (l) at (-1,1) {0};
\node[above] (r) at (1,1) {0};
\node[below] (b) at (0,-.6) {1};
\draw (l.south)--(b.north);
\draw (r.south)--(b.north);
\end{scope}
\begin{scope}[xshift=6cm]
\node[above] (l) at (-1,1) {$*$\\1};
\node[above] (r) at (1,1) {1\\$*$};
\node[below] (b) at (0,-.6) {0};
\draw (l.south)--(b.north);
\draw (r.south)--(b.north);
\end{scope}
\begin{scope}[xshift=12cm]
\node[above] (l) at (-1,1) {{?}\\0\\ {?}};
\node[above] (r) at (1,1) {0\\ {?}\\ {?}};
\node[below] (b) at (0,-.6) {{?}};
\draw (l.south)--(b.north);
\draw (r.south)--(b.north);
\end{scope}
\end{tikzpicture}
\caption{The deterministic cellular automaton $F_{0,0}$.}\label{fig-D}
\end{figure}



We say that the distribution of a configuration $\eta=(\eta_i: i\in \Z)$ is
\df{shift-invariant} if $\eta$ and $(\eta_{i+k}: i\in \Z)$ have the same
distribution for each $k\in \Z$, and \df{reflection-invariant} if $\eta$ and
$(\eta_{-i}: i\in \Z)$ have the same distribution. If $\mu$ is a
distribution and $y\in\{0,\ques,1\}^n$ is a finite word, we write
$\mu(y):=\mu\{\eta:(\eta_1,\ldots,\eta_n)=y\}$ for the corresponding cylinder
probability.

For shift-invariant distributions $\mu$ on
$\{0,\ques,1\}^{\Z}$, we introduce the weight function $w$ defined by
\begin{equation}\label{wdef}
w(\mu)=
\mu(\ques01)+\mu(\ques0)+\mu(\ques)-(p+q)\,\mu(?*1).
\end{equation}
To prove Proposition~\ref{prop:F0} we will establish the inequality
\begin{align}\label{key_ineq}
w(F_{p,q}\mu)\leq w(\mu)-(p+q)\mu(?01),
\end{align}
for any shift-invariant and reflection-invariant distribution $\mu$.
This inequality will indeed ensure that if $\mu$ is $F_{p,q}$-invariant, then $\mu(?01)=0$, which will imply in turn that $\mu(?)=0$.

To give some intuition for the proof of \eqref{key_ineq}, let us focus on the case $p=q=0$. Then the PCA $F_{0,0}$ is in fact deterministic
(see Figure \ref{fig-D}).
Suppose $\mu$ is shift-invariant and reflection-invariant. 
By looking at the possible pre-images of each pattern, we obtain the
following three equalities:
\begin{align*}
F_{0,0}\, \mu (\ques)
&=\mu(\ques\ques)+\mu(0\ques)+\mu(\ques0),\\
F_{0,0}\, \mu (\ques0)
&=\mu(\ques\ques1)+\mu(0\ques1)+\mu(\ques01),\\
F_{0,0}\, \mu (\ques01)&=0.
\end{align*}

Observe that:
$$\mu(\ques\ques)+\mu(\ques0)+\mu(\ques\ques1)+\mu(0\ques1)=\mu(\st\ques\ques)+\mu(\st\ques0)+\mu(\ques\ques1)+\mu(0\ques1)\leq \mu(\ques),$$
where $\st$ represents an unspecified symbol to be summed over.
Using reflection invariance to deduce
$\mu(0\ques)=\mu(\ques0)$, we then obtain that $w(F_{p,q}\mu)\leq w(\mu)$.




In this deterministic case, we can interpret the weight $w(\mu)$
as assigning a weight to each occurrence of the symbol $\ques$
as follows:
\begin{samepage}
\begin{itemize}
\item if a $\ques$ is followed by a $01$,
 then it receives weight $3$; 
\item if a $\ques$ is followed by a $0$ and then by something other than
    a $1$,
it receives weight $2$; 
\item otherwise, a $\ques$ receives weight $1$.
\end{itemize}
\end{samepage}
For a shift-invariant distribution $\mu$,
$w(\mu)$ is then the expected weight per site under $\mu$.

Let us now consider a
symmetric version of the weight system that we have introduced: for each
symbol $\ques$, we add its right-weight, as introduced above, to its
left-weight, which is equal to $3$ if it the previous letter is a $0$ and if
there is a $1$ before it (pattern $10\ques)$, to $2$ if the previous letter
is a $0$ and if there is something else than a $1$ before, and to $1$
otherwise.

Thus, the weight of the symbol $\ques$ in the pattern
$1\ques1$ is equal to $1+1=2$, while in the pattern
$10\ques\ques1$, the weight of the first $\ques$ symbol is
$3$ (left) $+$ $1$ (right) $=4$, and the weight of the
second one is equal to $1+1=2$.

Figure~\ref{fig-evol} shows an example of an evolution of the deterministic CA $F_{0,0}$ from an initial configuration represented at the top (with time going
down the page). The symmetrized weights of the symbols $\ques$ appearing in
the space-time diagram are shown in red. As illustrated in the figure, from a
pattern $1\ques1$, the symbol $\ques$ disappears and the weight thus
decreases, but in other cases the total weight is locally preserved.
Indeed, one can check that starting from any initial configuration
containing finitely many $\ques$ symbols, the total weight
is non-increasing under the action of $F_{0,0}$.

Moving to the general case,
allowing $p$ and $q$ to be positive can be interpreted as introducing
``mutations" into the determinstic evolution described by $F_{0,0}$,
which we control by introducing
the final term into the definition of the weight $w$ in (\ref{wdef}).
\begin{figure}
\begin{center}
\begin{tikzpicture}[thick]
 \matrix[matrix of math
nodes,nodes={circle,fill=white,inner sep=1pt},column
sep={7mm,between origins},row sep={7mm,between origins}]
(m) {
& \i && \q && \i && \i && \o && \q && \q && \i && \o &\\
\o && \o && \o && \o && \o && \q && \q && \o && \o && \i\\
& \i && \i && \i && \i && \q && \q && \q && \i && \o &\\
\o && \o && \o && \o && \o && \q && \q && \o && \o && \i\\
& \i && \i && \i && \i && \q && \q && \q && \i && \o &\\
\o && \o && \o && \o && \o && \q && \q && \o && \o && \i\\
 };
\matrix[yshift=-3.5mm,matrix of math
nodes,nodes={red},column sep={7mm,between origins},row
sep={7mm,between origins}] (l) {
&  && \bf 2 &&  &&  &&  && \bf 4 && \bf 2 && &&  &\\
 &&  &&  &&  &&  && \bf 3 && \bf 3 &&  &&  && \\
&  &&  &&  &&  && \bf 2 && \bf 2 && \bf 2 &&  &&  &\\
&&  &&  &&  &&  && \bf 3 && \bf 3 &&  &&  &&  \\
&  &&  &&  &&  && \bf 2 && \bf 2 && \bf 2 &&  &&  &\\
\  &&  &&  &&  &&  && \bf 3 && \bf 3 &&  &&  && \ \\
 };
\begin{scope}[on background layer]
\clip (m-6-1) rectangle ++(12.5,3.4);
 \draw[shift=(m-2-1),rotate=45,step=0.989949] (-2,-12)
grid (10,2);
\end{scope}
\end{tikzpicture}
\end{center}
\vspace{-10pt}
\caption{Example of evolution of
the weight of a configuration under the operator $F_{0,0}$. Time runs down the page,
 and the weight of each $\ques$ symbol is given below it.}
\label{fig-evol}
\end{figure}
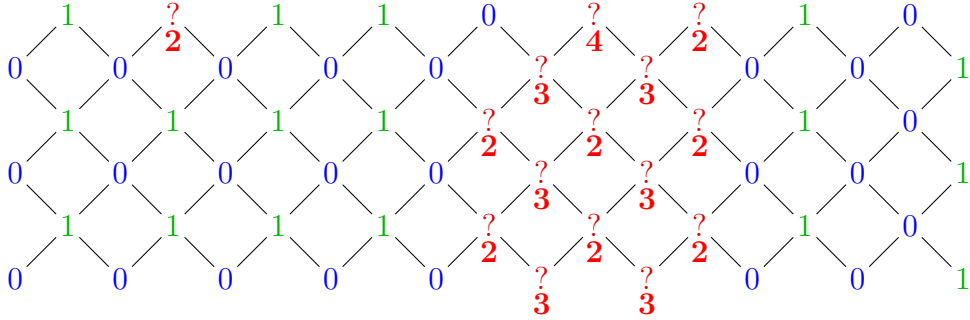

\subsection{Proof of ergodicity}
\label{sec-proof-ergodicity}
\begin{prop} \label{prop:F0}
For any $p$ and $q$, the PCA $F_{p,q}$ has no stationary distribution in which the symbol
$\ques$ appears with positive probability.
\end{prop}
\begin{proof} It suffices to show that there is no shift-invariant and reflection-invariant
stationary distribution in which the symbol $\ques$ appears with positive
probability. For consider iterating the PCA starting from the distribution
$\delta_{\ques}$ concentrated on the configuration with $\ques$ at all sites.
By Lemma \ref{lemma:monotonicity}(ii), the probability
$F_{p,q}^n\delta_{\ques}(\ques)$ is non-increasing, and if there is any
stationary distribution $\mu$ with positive probability of $\ques$, then
$F_{p,q}^n\delta_{\ques}(\ques)$ is bounded below by $\mu(\ques)$ for all $n$,
and so does not converge to 0. Then any limit point of the sequence of
C{\'e}saro sums of $F_{p,q}^n \delta_{\ques}$ is a stationary distribution that
has positive probability of $\ques$, and that is also shift-invariant and
reflection-symmetric.

The idea will be to compare the \emph{weight function} $w$ defined at
(\ref{wdef}) before and after applying a step of the evolution.

To shorten the proof, we write $\tw$ for a pair of consecutive symbols to be summed over the three possibilities $?0,0?,??$ (i.e.\ the pairs which can lead to a $?$ at the next step), so that for example $\mu(\tw1)=\mu(?01)+\mu(0?1)+\mu(??1)$.  We also write $r=1-p-q$.

Suppose $\mu$ is a shift-invariant distribution. Then we have the equalities
\begin{align}
\nonumber
  F_{p,q}\,\mu(?)&=r\mu(\tw);\\
\nonumber
  F_{p,q}\,\mu(?0)&=rp\bigl[\mu(\tw0)+\mu(\tw?)\bigr]+r(1-q)\mu(\tw1);\\
\label{equalities}
  F_{p,q}\,\mu(?01)&=rp(1-p)\mu(\tw00)+rpq\bigl[\mu(\tw0?)
 +\mu(\tw01)+\mu(\tw?)\bigr]  \\
\nonumber
  &\hspace{4.7cm}
+r(1-q)q\mu(\tw1)
 \\
\nonumber
  &=r^2p\mu(\tw00)+rpq\bigl[\mu(\tw0)+\mu(\tw?)\bigr]+r(1-q)q\mu(\tw1).
\end{align}
Summing these three equalities gives
\begin{align*}
&F_{p,q}\,\mu(?)+F_{p,q}\,\mu(?0)+F_{p,q}\,\mu(?01)\\
&=r\mu(\tw)+rp(1+q)\bigl[\mu(\tw0)+\mu(\tw?)\bigr]+r(1-q)(1+q)\mu(\tw1)+r^2p\mu(\tw00)\\
&\leq r\mu(\tw)+(p+q)\bigl[\mu(\tw0)+\mu(\tw?)\bigr]+r\mu(\tw1)+r^2(p+q)\mu(\tw00)
\end{align*}

Suppose in addition that $\mu$ is reflection-invariant. Then
\begin{align*}
\mu(\tw)+\mu(\tw 1)
&\leq \mu(0?)+\mu(?0)+\mu(??)+\mu(?1)+\mu(?01)\\
&\leq \mu(?)+\mu(0?)+\mu(?01)\\
&= \mu(?)+\mu(?0)+\mu(?01).
\end{align*}
Substituting into the previous equality gives:
\begin{align*}
&F_{p,q}\,\mu(?)+F_{p,q}\,\mu(?0)+F_{p,q}\,\mu(?01)\\
&\leq r\bigl[\mu(?)+\mu(?0)+\mu(?01)\bigr]+(p+q)\bigl[\mu(\tw0)+\mu(\tw?)\bigr]+r^2(p+q)\mu(\tw00).
\end{align*}

To deal with the last term on the right, observe that:
$$F_{p,q}\,\mu(?\st1)\geq r(1-p)\mu(\tw00)\geq r^2\mu(\tw00).$$
It follows that:
\begin{align}
\nonumber
w(F_{p,q}\mu)
&=F_{p,q}\,\mu(?)+F_{p,q}\,\mu(?0)+F_{p,q}\,\mu(?01)-(p+q)F_{p,q}\,\mu(?*1)\\
\nonumber
&\leq r\bigl[\mu(?)+\mu(?0)+\mu(?01)\bigr]+(p+q)\bigl[\mu(\tw0)+\mu(\tw?)\bigr]\\
\nonumber
&= \mu(?)+\mu(?0)+\mu(?01)\\
\nonumber
&\hspace{2cm}
+(p+q)\bigl[\mu(\tw0)+\mu(\tw?)- \mu(?)- \mu(?0)- \mu(?01)\bigr]\\
\nonumber
&\leq \mu(?)+\mu(?0)+\mu(?01)-(p+q)\mu(?*1)-(p+q)\mu(?01)\\
\label{wmu}
&=w(\mu)-(p+q)\mu(?01).
\end{align}


The last inequality comes from
\begin{align*}
\mu(\tw0)+\mu(\tw?)+ \mu(?\st1)&\leq \mu(?\st0)+\mu(0?0)+\mu(?\st?)+\mu(0??)+\mu(?\st1)\\
&\leq \mu(?)+\mu(0?).
\end{align*}

Finally suppose that $\mu$ is $F_{p,q}$-invariant. Then since $p+q>0$, it follows
from (\ref{wmu}) that that $\mu(?01)=0$.  This must imply that $\mu(?)=0$.
To explain why, let us first consider the case $p,q>0$. Then, from $0=\mu(?01)=F_{p,q}\,\mu(?01)\geq rpq\,\mu(\tw),$ we obtain $\mu(\tw)=0$, and then $\mu(?)= F_{p,q}\,\mu(?)=r\mu(\tw)=0$.
In the case $p=0$ and $q>0$, using the equations of \eqref{equalities}, we get, successively,
\begin{align*}
0=\mu(?01)=&F_{0,q}\,\mu(?01)=(1-q)^2q\,\mu(\tw1),\\
\mu(?0)=&F_{0,q}\,\mu(?0)=(1-q)^2\mu(\tw1)=0,\\
\mu(?)=&F_{0,q}\,\mu(?)= (1-q)\mu(\tw)=
(1-q)\mu(??)\leq (1-q)\mu(?),
\end{align*}
so that we also deduce that $\mu(?)=0$.
A similar argument applies in the case when $p>0$ and $q=0$.
\end{proof}

Now, we can quickly deduce our main result.

\begin{proof}[Proof of Theorem \ref{thm:2d}]
We know that the distribution of the states (win, loss, draw) of the sites
along a diagonal $S_k$ in the percolation game is a stationary distribution
for $F_{p,q}$. Since by Proposition \ref{prop:F0}, $F_{p,q}$ has no stationary
distribution with positive probability of $\ques$ whenever $p+q>0$, the
probability of a draw in the percolation game must be $0$. Then by Proposition
\ref{prop-draws}, the PCA $A_{p,q}$ is ergodic for each $p$ and $q$ with $p+q>0$.
\end{proof}

\section{Trapping games and the hard-core model}
\label{sec:hardcore}
\subsection{The two-dimensional case}
\label{sec:hardcore2d} In this section we develop the relationship between
the trapping game and the hard-core model.  We start in the setting of
$\Z^2$ where the ideas are easiest to understand, but our main application
will be in Section \ref{sec:hardcore-general}, when we establish a more
general framework, and apply it to show that certain higher-dimensional games
have positive probability of a draw when $p$ is sufficiently small.

Consider the hard-core PCA $A_{p,0}$. This PCA is known to belong to a family of
one-dimensional PCA having a stationary distribution that is itself a
stationary Markov chain indexed by $\Z$~\cite{beljaev, dobrushin,
mairesse_ihp}.  This distribution, $\mu_{p}$ say, is the law of the
stationary Markov chain on $\Z$ with transition matrix
\begin{equation}\label{matrix}
P=\begin{pmatrix}
p_{0,0}&p_{0,1}\\
p_{1,0}&p_{1,1}\end{pmatrix}
=
\begingroup
\renewcommand*{\arraystretch}{2}
\begin{pmatrix}
 { \frac{2-p-\sqrt{p(4-3p)}}{2(1-p)^2}}
 &{\frac{2p^2-3p+\sqrt{p(4-3p)}}{2(1-p)^2}}\\
{\frac{-p+\sqrt{p(4-3p)}}{2(1-p)}}
&{\frac{2-p-\sqrt{p(4-3p)}}{2(1-p)}}
\end{pmatrix}
\endgroup,
\end{equation}
on state space $\{0,1\}$. (See Section 4.2 of \cite{mairesse_tcs} -- note
that $p$ there corresponds to our $1-p$). In fact, the evolution of the PCA
started from $\mu_p$ is time-reversible -- the distribution of the
two-dimensional space-time diagram obtained (via the correspondence at
(\ref{spacetime})) is invariant under reflection in the line $x_1+x_2=k$ for
any $k$. (In addition, the distribution $\mu_p$ is itself reversible as a
Markov chain on $\Z$, which corresponds to symmetry of the two-dimensional
picture under reflection in the line $x_1=x_2$).

By Theorem~\ref{thm:2d}, we know that $\mu_p$ is in fact the unique
stationary distribution of $F_{p,0}$. Therefore the probability
that either the first player wins the trapping game starting from
the origin, or the origin is a trap, is
$$\mu_p(0)=\frac{p_{1,0}}{p_{1,0}+p_{0,1}}
=\frac12\biggl(1+\sqrt{\frac{p}{4-3p}}\biggr).$$
By conditioning on the event that the origin is open, we then find
that the probability that the win probability is
$(\mu_p(0)-p)/(1-p)$ which corresponds to the
quantity given in \eqref{win-p}

An illuminating way to understand the presence of this Markovian reversible
stationary distribution is to consider the \emph{doubling graph} of the PCA,
corresponding to two consecutive times of its evolution \cite{vasilyev,
kozlov, dobrushin}. This is an undirected bipartite graph, connecting sites
between which there is an influence induced by the rules of the PCA.

As in Section \ref{sec:PCAdef}, we can think of
a configuration of the PCA as indexed by a diagonal
$S_k=\{(x_1,x_2): x_1+x_2=k\}$ of $\Z^2$.
A time-step of the PCA then corresponds to moving
from a configuration on $S_{k+1}$ to a configuration on $S_k$.

As before, let $\Out(x)=\{x+e_1, x+e_2\}$ for $x\in S_k$. The elements of
$\Out(x)$ lie in $S_{k+1}$, and are the sites to which the token may move
from sites $x$; they are the sites whose values appear on the right side of
the recurrence (\ref{omegarecursion}) for the value $\eta(x)$. Then the
bijection $\phi:\Z^2\to\Z^2$ given by
\begin{equation}\label{phidef}
\phi(x)=x+e_1+e_2,
\end{equation}
which maps $S_k$ to $S_{k+2}$ for each $k$,
has the following symmetry property: for all $x$ and $y$,
\begin{equation}
\label{doublingsymmetry}
y\in \Out(x) \quad\text{if and only if}\quad \phi(x)\in \Out(y).
\end{equation}

Let $D_k$ be the undirected bipartite graph with vertex set $S_k\cup
S_{k+1}$, and an edge joining $x\in S_k$ and $y\in S_{k+1}$ if $y\in
\Out(x)$.

The graphs $D_k$ are isomorphic to each other for all $k\in\Z$.
The \df{doubling graph} is a generic graph $D$ that is isomorphic to each $D_k$.
We can also interpret $D$ as the image of $\Z^2$ under the
equivalence relation $x\equiv \phi(x)$.
More simply, we can take $D$ to be $\Z$ with nearest-neighbour edges, as shown in Figure
\ref{fig:doublinggraph}. Consider the map $v:\Z^2\to\Z$ given by
\begin{equation}\label{isom}
v\big( (x_1, x_2) \big) = x_1-x_2.
\end{equation}
Restricted to the set $S_k\cup S_{k+1}$, this gives an isomorphism between
$D_k$ and $D$, for any $k$.


\begin{figure}
\begin{center}
\includegraphics[scale=0.9]{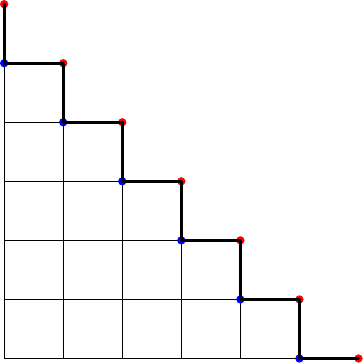}
\includegraphics[scale=0.9]{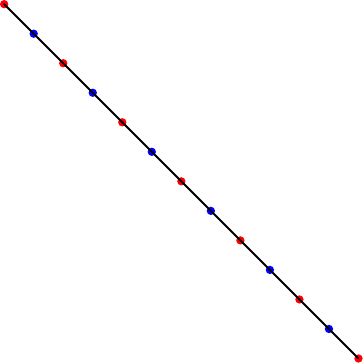}
\end{center}
\caption{
The doubling graph $D$, isomorphic to $\Z$, shown on the left
in correspondence with two successive diagonals $S_k$, $S_{k+1}$
of $\Z^2$.}
\label{fig:doublinggraph}
\end{figure}

Recall the definition of the hard-core model as given in Section
\ref{sec:introhardcore}; a Gibbs distribution for the hard-core model on a
graph with vertex set $W$ with activity $\lambda>0$ is a distribution on
configurations $\eta\in\{0,1\}^W$
satisfying (\ref{hardcoredef}).

Consider the hard-core model on the doubling graph $D$ with
vertex set $W=\Z$. This is a bipartite graph, with
bipartition $W=W_0\cup W_1$ where $W_0$ and $W_1$ are the
sets of even and odd integers respectively. We consider the
following two update procedures for configurations on
$\{0,1\}^W$. For an ``odd" update, for each vertex $x\in
W_1$ independently, resample $\eta(x)$ according to the
values at its two neighbours, setting $\eta(x)=0$ with
probability 1 if either of the neighbours takes value 1,
and otherwise setting $\eta(x)=1$ with probability $1-p$.
For an ``even" update, do the same for vertices in $W_0$.
Set $\lambda=1/p-1$, so that $1-p=\lambda/(1+\lambda)$.
Since each of $W_0$ and $W_1$ is an independent set of $D$,
any Gibbs distribution for the hard-core model with
activity $\lambda$ is invariant under both of these update
operations. (This is a version of Glauber dynamics).

Take some even $k\in\Z$. Suppose we start from a configuration on
$\{0,1\}^W$, which, via the isomorphism (\ref{isom}) between $D$ and $D_k$
under which $W_0$ maps to $S_k$ and $W_1$ to $S_{k+1}$, corresponds to a
configuration in $\{0,1\}^{S_k\cup S_{k+1}}$.  Perform an odd update,
resampling the sites of $W_1$, leading to a new configuration on $\{0,1\}^W$.
Considering now (\ref{isom}) as an isomorphism between $D_k$ and $D_{k-1}$,
which maps $W_0$ to $S_k$ and $W_1$ to $S_{k-1}$, the updated configuration
on $\{0,1\}^W$ corresponds to a configuration in $\{0,1\}^{S_{k-1}\cup S_k}$,
whose values at the sites in $S_k$ are left unchanged.  We can interpret the
update as generating a configuration on $S_{k-1}$ from a configuration on
$S_k$.  This procedure is identical to that which occurs in one iteration of
the PCA $A_{p,0}$.

If we then perform an even update, resampling the sites of $W_0$,
we can pass in the same way to a configuration on
the sites of $S_{k-2}\cup S_{k-1}$, which corresponds
to the next step of the PCA.

Continuing to perform odd and even updates alternately, we reproduce the
evolution of the PCA. A Gibbs distribution on $D$ is characterized by its
marginal on the vertices of one half of the bipartition, say $W_0$. Since the
distribution is preserved by the updates, this distribution on
$\{0,1\}^{W_0}$ is $2$-periodic for the PCA.

In fact, for any $\lambda$ there is a unique Gibbs distribution for the
hard-core model on $\Z$.  Since the hard-core interaction is homogeneous and
nearest-neighbour, this Gibbs distribution is itself a stationary Markov
chain indexed by $\Z$.  Let $Q=Q_p$ be its transition matrix.  Therefore, the
marginal distributions on $W_0$ and $W_1$ are in fact equal to each other.
Call this marginal distribution $\mu_p$.  Then $\mu_p$ is the law of the
stationary Markov chain with transition matrix $P=Q^2$.  This $\mu_p$ is a
stationary distribution for the PCA $A_{p,0}$, and the matrix $P$ is the one in
\eqref{matrix}. See Figure \ref{fig:twomu} for an illustration.
\begin{figure}
\begin{center}
\begin{tikzpicture}[>=latex]
  \draw[dashed]  (0,0)node[left=3pt]{$\mu_p$}--(8,0);
  \draw[dashed] (0,1.5)node[left=3pt]{$\mu_p$}--(8,1.5);
  \draw[thick] (-1,0)edge[bend left,->] node[left]{$A_{p,0}$} (-1,1.5);
  \draw[thick] (8.5,1.5)edge[bend left,->] node[right]{$A_{p,0}$} (8.5,0);
  \node[circle,fill=red,inner sep=1.5pt] at (0,0) (0){};
  \node[circle,fill=red,inner sep=1.5pt] at (2,0) (2){};
  \node[circle,fill=red,inner sep=1.5pt] at (4,0) (4){};
  \node[circle,fill=red,inner sep=1.5pt] at (6,0) (6){};
  \node[circle,fill=red,inner sep=1.5pt] at (8,0) (8){};
  \node[circle,fill=blue,inner sep=1.5pt] at (1,1.5) (1){};
  \node[circle,fill=blue,inner sep=1.5pt] at (3,1.5) (3){};
  \node[circle,fill=blue,inner sep=1.5pt] at (5,1.5) (5){};
  \node[circle,fill=blue,inner sep=1.5pt] at (7,1.5) (7){};
  \draw (0)--(1)--(2)--(3)--(4)--(5)--(6)--(7)--(8);
\end{tikzpicture}
\end{center}
\caption{
The Markovian distribution corresponding to a Gibbs measure
for the hard-core model on the doubling graph $W$
yields a Markovian distribution $\mu_p$ on
each of the two vertex classes $W_0$ and $W_1$.
Since the Gibbs distribution is invariant
under the update procedures, the distribution $\mu_p$
is invariant for the PCA.}
\label{fig:twomu}
\end{figure}
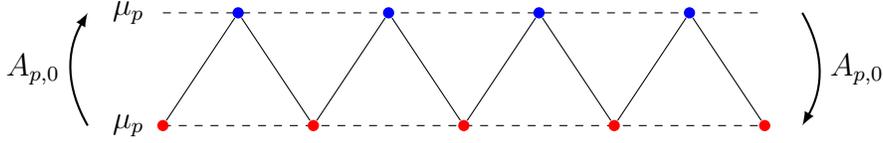


Combining with monotonicity properties (of the sort written
in Lemma \ref{lemma:monotonicity}(i)), one can use the uniqueness of the Gibbs measure on $\Z$
to conclude that $A_{p,0}$ indeed has a unique stationary distribution;
we explain this in a more general setting in Section \ref{sec:converse}.

Before that, in the next section we will use the implication in the other direction to
establish the result of Theorem \ref{thm:hardcoredraw}; in situations where there exist multiple Gibbs distributions for
the hard-core model, we can conclude that there are multiple periodic
distributions for the corresponding PCA; then the PCA is non-ergodic, and
draws occur with positive probability in the corresponding game.

\subsection{General framework}
\label{sec:hardcore-general} Recall that in the setting of Theorem
\ref{thm:hardcoredraw}, we have a locally finite graph $G$ with vertex set
$V$, along with a partition $(S_k: k\in \Z)$ of $V$ and an integer $m\geq 2$,
such that conditions (A1) and (A2) given in Section \ref{sec:introhardcore}
hold.

We also defined $D_k$ be the graph with vertex set $S_k\cup\dots\cup
S_{k+m-1}$, with an undirected edge $(x,y)$ whenever $(x,y)$ is a (directed)
edge of $V$. For convenience we will also use $D_k$ to denote the vertex set
$S_k\cup\dots\cup S_{k+m-1}$.

\begin{lem}\label{lemma:Dk}
The graphs $D_k$ are isomorphic to each other for all $k\in\Z$.
\end{lem}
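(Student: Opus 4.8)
The plan is to construct, for each $k$, an explicit isomorphism $\psi_k\colon D_k\to D_{k+1}$; then composing these and using that graph isomorphism is symmetric and reflexive yields $D_k\cong D_{k'}$ for all $k,k'\in\Z$. The graphs $D_k$ and $D_{k+1}$ share the vertex set $S_{k+1}\cup\cdots\cup S_{k+m-1}$; the graph $D_k$ has the extra block $S_k$, and $D_{k+1}$ the extra block $S_{k+m}=\phi(S_k)$. Accordingly I would define $\psi_k$ to act as $\phi$ on $S_k$ and as the identity on the shared part $S_{k+1}\cup\cdots\cup S_{k+m-1}$. Since (A2) makes $\phi$ restrict to a bijection $S_k\to S_{k+m}$, this is a bijection on vertex sets; the content is to check it preserves adjacency in both directions.

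The key preliminary step is to classify the edges of $D_k$. Using (A1): if $x\in S_j$ and $y\in\Out(x)$ then $y\in S_{j+1}\cup\cdots\cup S_{j+m-1}$, so no directed edge of $G$ has both endpoints in a single block $S_j$; and dually, if $w\in S_{k+m}$ and $w\in\Out(z)$ with $z\in S_j$, then $j+1\le k+m\le j+m-1$, so every in-neighbour of a vertex of $S_{k+m}$ lies in $S_{k+1}\cup\cdots\cup S_{k+m-1}$. Consequently the edges of $D_k$ split into (i) \emph{internal} edges with both endpoints in $S_{k+1}\cup\cdots\cup S_{k+m-1}$, and (ii) edges incident to $S_k$; and for an edge of type (ii), say $\{x,y\}$ with $x\in S_k$, the other endpoint $y$ cannot lie in $S_k$ (no edges inside a block) nor can $x$ be an out-neighbour of $y$ (that would force $y\in S_{k-1}\cup\cdots$, outside $D_k$, by the same inequality), so the directed edge points out of $x$, i.e.\ $y\in\Out(x)$, and $y\in S_{k+1}\cup\cdots\cup S_{k+m-1}$ by (A1). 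Symmetrically, the edges of $D_{k+1}$ split into the \emph{same} internal edges (i) together with edges (ii$'$) joining some $w\in S_{k+m}$ to some $z\in\In(w)\subseteq S_{k+1}\cup\cdots\cup S_{k+m-1}$.

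Now I would conclude as follows. The map $\psi_k$ fixes every internal edge (i), and these are literally the same edges in $D_k$ and $D_{k+1}$ since both are the undirected subgraph of $G$ induced on $S_{k+1}\cup\cdots\cup S_{k+m-1}$. An edge of type (ii), $\{x,y\}$ with $x\in S_k$ and $y\in\Out(x)$, is sent by $\psi_k$ to $\{\phi(x),y\}$, which is an edge of $D_{k+1}$ of type (ii$'$) precisely because $y\in\Out(x)=\In(\phi(x))$ by (A2). Conversely, every edge $\{w,z\}$ of $D_{k+1}$ of type (ii$'$), with $w=\phi(x)$ for the unique $x\in S_k$ and $z\in\In(w)=\Out(x)$, is the $\psi_k$-image of the type-(ii) edge $\{x,z\}$ of $D_k$. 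Hence $\psi_k$ carries the edge set of $D_k$ bijectively onto the edge set of $D_{k+1}$, so it is a graph isomorphism, and the lemma follows by composition. The only step needing genuine care is the edge classification — verifying that every edge of $D_k$ incident to the boundary block $S_k$ is oriented \emph{out} of $S_k$, and dually into $S_{k+m}$ in $D_{k+1}$ — which is exactly where hypotheses (A1) and (A2) enter; everything else is formal bookkeeping.
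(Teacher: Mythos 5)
Your proposal is correct and is essentially identical to the paper's proof: the paper defines exactly the same map (called $\chi_k$ there), acting as $\phi$ on $S_k$ and as the identity on $S_{k+1}\cup\dots\cup S_{k+m-1}$, and concludes by induction/composition. You simply spell out in more detail the edge classification and the verification that (A1) forces boundary edges to point out of $S_k$ and (A2) matches them to the in-edges of $S_{k+m}$, which the paper leaves as an assertion.
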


\begin{proof}
Consider the map $\chi_k$ defined on $D_k$
under which
\[
\chi_k(x)=\begin{cases}
x&\text{ if } x\in S_{k+1}\cup\dots\cup S_{k+m-1}\\
\phi(x)&\text{ if } x\in S_k
\end{cases}.
\]
From assumptions (A1) and (A2) above,
$\chi_k$
is a graph isomorphism from $D_k$ to $D_{k+1}$.
Hence indeed $D_k$ and $D_{k+1}$ are isomorphic,
and so by induction any two $D_k$, $D_{k'}$ are isomorphic.
\end{proof}




We then take $D$
to be a graph isomorphic to any $D_k$.
(When $m=2$ we sometimes call $D$ the \df{doubling graph}).  Note that $D$ is $m$-partite.
Specifically, let us fix some isomorphism $f_0$ from $D_0$ to $D$, and let
$W_i$ be the image of $S_i$ under $f_0$, for $i=0,\dots, m-1$. Then
$(W_0,\ldots,W_{m-1})$ is a partition of the vertices of $D$ into $m$
classes, and assumption (A1) guarantees that there are no edges within a
class $W_i$.

It will be important that we can map both $D_k$ and $D_{k+1}$ to $D$ in such
a way that the vertices common to $D_k$ and $D_{k+1}$ have the same image in
both maps.
\begin{samepage}
\begin{lem}\label{lemma:DkDk+1}
There exists a family of maps $(f_k: k\in\Z)$ such that $f_k$ is a graph
isomorphism from $D_k$ to $D$, and such that the following properties hold.
\begin{itemize}
\item[(a)] For each $k$,
\[
f_k(x)=\begin{cases}
f_{k+1}(x)
&\text{ for } x\in D_k\cap D_{k+1}=S_{k+1}\cup\dots\cup S_{k+m-1}\\
f_{k+1}(\phi(x))
&\text{ for } x\in S_k.
\end{cases}
\]
\item[(b)] For each $k$ and each $r\in\{k,k+1,\dots,k+m-1\}$, the image
    of $S_r$ under $f_k$ is $W_{r\newmod m}$.
\item[(c)] Let $x\in S_k$ and $y\in D_k=S_k\cup\dots\cup S_{k+m-1}$. Then
    $y\in \Out(x)$ if and only if $f_k(y)$ is a neighbour of $f_k(x)$ in
    $D$.
\end{itemize}
\end{lem}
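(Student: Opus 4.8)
The plan is to construct the family $(f_k)_{k\in\Z}$ recursively from the given isomorphism $f_0\colon D_0\to D$, using the graph isomorphisms $\chi_k\colon D_k\to D_{k+1}$ produced in the proof of Lemma~\ref{lemma:Dk} (recall that $\chi_k$ is the identity on $S_{k+1}\cup\cdots\cup S_{k+m-1}$ and equals $\phi$ on $S_k$). Concretely, I would put $f_{k+1}:=f_k\circ\chi_k^{-1}$ for $k\ge 0$ and $f_k:=f_{k+1}\circ\chi_k$ for $k\le-1$; each $f_k$ is then defined exactly once, and by construction $f_k=f_{k+1}\circ\chi_k$ holds for every $k\in\Z$. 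Since each $\chi_k$ is an isomorphism $D_k\to D_{k+1}$ and $f_0$ is an isomorphism $D_0\to D$, an immediate induction shows that every $f_k$ is an isomorphism $D_k\to D$. Property~(a) is then just the relation $f_k=f_{k+1}\circ\chi_k$ unpacked: $\chi_k$ fixes the overlap $D_k\cap D_{k+1}=S_{k+1}\cup\cdots\cup S_{k+m-1}$, giving $f_k(x)=f_{k+1}(x)$ there, and $\chi_k=\phi$ on $S_k$, giving $f_k(x)=f_{k+1}(\phi(x))$ for $x\in S_k$.

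For property~(b) I would induct on $|k|$, the base case $k=0$ being the definition $W_i=f_0(S_i)$ (and $i\newmod m=i$ for $0\le i\le m-1$). For the step $k\to k+1$: by~(a), $f_{k+1}$ agrees with $f_k$ on $S_r$ for $k+1\le r\le k+m-1$, so those cases are inherited from the hypothesis; and on $S_{k+m}=\phi(S_k)$, property~(a) gives $f_{k+1}(\phi(y))=f_k(y)$ for $y\in S_k$, hence $f_{k+1}(S_{k+m})=f_k(S_k)=W_{k\newmod m}=W_{(k+m)\newmod m}$. The step $k\to k-1$ is symmetric, using the other clause of~(a).

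For property~(c), fix $x\in S_k$ and $y\in D_k$. The point is that inside $D_k$ the edges incident to $x$ encode exactly $\Out(x)$. Indeed, by~(A1) we have $\Out(x)\subset S_{k+1}\cup\cdots\cup S_{k+m-1}\subset D_k$; and any in-neighbour $w$ of $x$ satisfies $x\in\Out(w)$, so if $w\in S_j$ then (by~(A1) applied to $w$) $k-m+1\le j\le k-1$, placing $w$ outside $D_k$; also $S_k$ carries no internal edges, again since $\Out$ of any vertex of $S_k$ avoids $S_k$. Therefore, for $y\in D_k$ we have $y\in\Out(x)$ if and only if $\{x,y\}$ is an edge of $D_k$, and since $f_k$ is a graph isomorphism from $D_k$ onto $D$, this holds if and only if $f_k(y)$ is a neighbour of $f_k(x)$ in $D$.

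The recursion and the two inductions are routine; the one place where the specific hypotheses are really used is~(c), where one must check that symmetrizing the directed graph $G$ to form $D_k$ creates no spurious edge at a vertex of $S_k$ — and this is exactly what~(A1) prevents. I expect no further difficulty.
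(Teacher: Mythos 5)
Your construction ($f_{k+1}=f_k\circ\chi_k^{-1}$ for $k\ge 0$, $f_k=f_{k+1}\circ\chi_k$ for $k\le -1$) is exactly the paper's definition of the $f_k$ by composing $f_0$ with the isomorphisms $\chi_k$ from Lemma~\ref{lemma:Dk}, and your verifications of (a), (b) and (c) — in particular the observation that (A1) forces $\In(x)$ to be disjoint from $D_k$ for $x\in S_k$, so that the $D_k$-neighbours of $x$ are precisely $\Out(x)$ — match the paper's argument. The proof is correct and essentially identical to the one given there, just with the inductions for (b) written out more explicitly.
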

\end{samepage}

\begin{proof}
Let $f_0$ be the isomorphism from $D_0$ to $D$ described just above.
Then we can compose $f$ with the isomorphisms $\chi_k$ defined in
the proof of Lemma \ref{lemma:Dk}, by setting
\[
f_k=
\begin{cases}
f_0\circ\chi_{-1}\circ\chi_{-2}\circ\dots\circ\chi_k
&\text{ for }k<0\\
f_0\circ\chi_0^{-1}\circ\chi_1^{-1}\circ\dots\circ\chi_{k-1}^{-1}
&\text{ for }k>0.
\end{cases}
\]
Then using assumption (A2), it is easy to check by induction upwards and
downwards from $0$ that $f_k$ is an isomorphism from $D_k$ to $D$ satisfying
the properties stated in (a) and (b), for each $k$.

Finally note that by (A1), if $x\in S_k$ then $\Out(x)\subseteq
S_k\cup\dots\cup S_{k+m-1}$ while $\In(x)$ is disjoint from
$S_k\cup\cdots\cup S_{k+m-1}$. By definition, the set of neighbours of $x$ in
the graph $D_k$ is then $\Out(x)$. Then part (c) follows since $f_k$ is a
graph isomorphism from $D_k$ to $D$.
\end{proof}

Given a hard-core configuration in $\{0,1\}^D$, we can consider Glauber
update steps that resample the vertices of one of the vertex classes $W_0$,
$W_1,\dots,W_{m-1}$. To perform an update of the class $W_i$: for each $v\in
W_i$ independently, let the new value at $v$ be 0 if any neighbour of $x$ has
value 1, and otherwise let the new value at $v$ be 0 with probability
$p=1/(1+\lambda)$ and 1 with probability $1-p=\lambda/(1+\lambda)$. If a
distribution on $\{0,1\}^D$ is a Gibbs distribution for the hard-core model
on $D$ with activity $\lambda=1/p-1$, then it is invariant under this update
procedure for each $i=0,1,\dots, m-1$. (Again, this is a version of the
Glauber dynamics for the hard-core model on $D$.)

\begin{proof}[Proof of Theorem \ref{thm:hardcoredraw}]
We start by defining an analogue of the hard-core PCA $A_{p,0}$ in the general setting. As
before, we have the recursion (\ref{omegarecursion}) for the outcome of the
game started from $x\in V$, in terms of the outcomes started from the
elements of $\Out(x)$ together with the information whether $x$ itself
is a trap or open. (Recall that we treat the game from $x$ as a win if $x$ is
a trap.)

As in previous sections we can specialize that recursion to configurations
involving only the symbols $0=W$ and $1=L$. This gives the following
recursion for a family of variables $(\gamma(x): x\in V)\in\{0,1\}^V$ (which
we do not assume to be necessarily game outcomes):
\begin{equation}
\begin{aligned}
\text{$x$ a trap}\;\Rightarrow\;\gamma(x)&=0;\\
\text{$x$ open}\;\Rightarrow\;
\gamma(x)&=\begin{cases}
1&\text{if }\gamma(y)=0 \text{ for all }y\in \Out(x)\\
0&\text{if }\gamma(y)=1 \text{ for some }y\in \Out(x).
\end{cases}
\end{aligned}
\label{01recursion}
\end{equation}

If $x\in S_k$, then $\Out(x)\subset S_{k+1}\cup\dots\cup S_{k+m-1}$. Thus,
the recursion (\ref{01recursion}) gives $(\gamma(x): x\in S_k)$ in terms of
$(\gamma(x): x\in S_{k+1}\cup\dots\cup S_{k+m-1})$ and the random
percolation configuration of traps and open sites in $S_k$ (which we take as usual to be
product measure with each site being a trap with probability $p$). This is
analogous to the PCA $A_{p,0}$ considered earlier (although for $m>2$, a ``state"
of the PCA is now more complicated to describe).

Fix $K\in\Z$, and take some boundary condition $(\gamma(x): x\in
S_K\cup\cdots\cup S_{K+m-1})$, which we allow to be be random, but which is
independent of the percolation configuration in $\bigcup_{r<K}
S_r$.
Applying (\ref{01recursion}) repeatedly then generates an evolution
$(\gamma(x): x\in S_r)_{r\leq K+m-1}$.

We will couple this evolution with a process of configurations in
$\{0,1\}^D$. For $k\leq K$ and $v\in D$, define
$\sigma_k(v)=\gamma(f_k^{-1}(v))$.  Then $\sigma_k\in\{0,1\}^D$ for each $k$.
The idea is now to show that the transformation from $\sigma_{k+1}$ to
$\sigma_{k}$ is identical to a hard-core update of the vertex class
$W_{k\newmod m}$, with randomness provided by the percolation configuration of open in $S_k$.  Notice that $\sigma_{k+1}$ is a function of
$(\gamma(x):x\in S_{k+1}\cup\dots\cup S_{k+m})$, while $\sigma_{k}$ is a
function of $(\gamma(x): x\in S_{k}\cup\dots\cup S_{k+m-1})$.

If $v\in W_i$ where $i\ne k\newmod m$, then by Lemma \ref{lemma:DkDk+1}(a)
and (b), $f_{k+1}^{-1}(v)=f_k^{-1}(v)\in S_{k+1}\cup \dots \cup S_{k+m-1}$.
Thus $\sigma_{k+1}(v)=\sigma_k(v)$. So the only sites in $D$ which can change
their value between the configuration $\sigma_{k+1}$ and the configuration
$\sigma_k$ are those in $W_{k\newmod m}$.  Consider such a $v\in W_{k\newmod
m}$, and let $x=f^{-1}_k(v)$ so that $x\in S_k$ (by Lemma
\ref{lemma:DkDk+1}(b)) and $\sigma_k(v)=\gamma(x)$.

Translating (\ref{01recursion}) and using Lemma \ref{lemma:DkDk+1}(c) gives
that for $v\in W_{k\newmod m}$:
\begin{equation}
\begin{aligned}
\text{$x$ a trap}\;\Rightarrow\;\sigma_k(v)&=0;\\
\text{$x$ open}\;\Rightarrow\;
\sigma_k(v)&=\begin{cases}
1&\text{if }\sigma_{k+1}(u)=0 \text{ for all }u\sim v\\
0&\text{if }\sigma_{k+1}(u)=1 \text{ for some }u\sim v.
\end{cases}
\end{aligned}
\label{sigmarecursion}
\end{equation}

Each bit of randomness (the information about whether $x\in S_k$ is an open site or a trap) is used only once. Since each $x$ is a trap with probability $p$ independently,
we have that the conditional distribution of $\sigma_k$
given $\sigma_{k+1}, \sigma_{k+2},\dots, \sigma_K$ is precisely that
obtained by performing a hard-core update of the vertex class $W_{k\newmod m}$.

Now let $\mu$ be a Gibbs distribution for the hard-core model on $D$.  By
choosing the distribution of the boundary condition $(\gamma(x): x\in
S_K\cup\cdots\cup S_{K+m-1})$ correspondingly, we can arrange that $\sigma_K$
has distribution $\mu$. But then since $\mu$ is invariant under the hard-core
updates, $\sigma_k$ has distribution $\mu$ for all $k<K$ also.

So, suppose that there exist multiple Gibbs distributions, and let $\mu$ and
$\nu$ be two of them.  By alternating between $\mu$ and $\nu$, we can arrange
a sequence indexed by $K$ of boundary conditions $(\gamma^{(K)}(x): x\in
S_K\cup\cdots\cup S_{K+m-1})$ that induces a sequence of distributions of
$\sigma^{(K)}_0$ having both $\mu$ and $\nu$ as limit points as $K\to\infty$.
In particular, the configuration $\sigma^{(K)}_0$ does not converge almost
surely as $K\to\infty$ (in the product topology).  So the sequence of
configurations $(\gamma^{(K)}(x): x\in S_0\cup\cdots\cup S_{m-1})$ does not
converge almost surely.

Now we apply the same argument that we used for the second part of the proof
of Proposition \ref{prop-draws}. If the game started from $x$ is not a draw,
then one player has a strategy which guarantees a win in fewer than $N$
moves, where $N\in\N$ is an almost surely finite random variable which
depends on the configuration.  Then the values $\gamma^{(K)}(x)$
must agree for all large enough $K$.  Hence if there is zero probability of a
draw from each site, then the configuration $(\gamma^{(K)}(x): x\in
S_0\cup\cdots\cup S_{m-1})$ converges almost surely as $K\to\infty$. By the
argument in the previous paragraph, this contradicts the existence of
multiple hard-core Gibbs distributions.
\end{proof}

\subsection{Remarks on the converse direction of Theorem \ref{thm:hardcoredraw}}
\label{sec:converse} We do not know whether the converse of Theorem
\ref{thm:hardcoredraw} holds in general; that is, whether the uniqueness of the
hard-core Gibbs distribution on $D$ implies that there are no draws
for the game on $G$. However, for the case $m=2$ (when the graph $D$ is bipartite), it is indeed the case that this converse direction also holds.

This can be established using a relatively standard argument involving
Gibbs measures. Let us outline the argument. Let $m=2$, and let $D$
have vertex set $W=W_0\cup W_1$ as above. Let us write $B$ for the operator
which first does a hard-core update of the sites in $W_0$, and then a hard-core
update of the sites in $W_1$.

\newcommand{\ctop}{\sigma^{\max}}
\newcommand{\cbottom}{\sigma^{\min}}

We can define a partial order on configurations in $\{0,1\}^W$
by setting $\sigma\leq\tsigma$ if $\sigma(v)\leq \tsigma(v)$ for
all $v\in W_0$ and $\sigma(v)\geq \tsigma(v)$ for all $v\in W_1$.
Then the operator $B$ (acting on distributions on $\{0,1\}^W$)
is monotonic; if $\mu\leq\nu$ in the sense of stochastic domination,
then also $B\mu\leq B\nu$.
(This is closely related to the property in Lemma \ref{lemma:monotonicity}(i).)
Let us write
$\ctop$ for the maximal configuration (taking value $1$ on all sites of
$W_0$ and $0$ on all sites of $W_1$)
and $\cbottom$ for the minimal configuration.

Then a monotonicity argument similar to that in Proposition \ref{prop-ergodicity}
and Proposition \ref{prop-draws} establishes that the probability of
a draw is $0$ precisely if there is a unique stationary distribution on $\{0,1\}^W$ for the operator $B$.

Suppose therefore that there is a unique Gibbs measure $\mu$ for the hard-core
model on $D$. We wish to show that $\mu$ is the unique invariant
distribution for the operator $B$.

Consider any increasing sequence of finite subsets of $W$,
$\Lambda_1\subset\Lambda_2\subset\dots\subset\Lambda_n\subset\dots$,
with $\bigcup \Lambda_n=W$. Let $\mu^{\max}_{k,n}$ be the distribution
obtained by applying the hard-core update procedure $k$ times
on the sites of $\Lambda_n$, with the values at sites of $\Lambda_n^c$
fixed, starting from the configuration $\ctop$.
Let $\mu^{{\min}}_{k,n}$ be the analogous distribution starting
from $\cbottom$.

We have that $\mu^{\max}_{k,n}\downarrow \mu^{\max}_n$
and $\mu^{\min}_{k,n}\uparrow \mu^{\min}_n$
as $k\to\infty$,
where $\mu^{\max}_n$ and $\mu^{\min}_n$ are the hard-core
measures on the finite vertex set $\Lambda_n$
with maximal and minimal boundary conditions respectively.

Furthermore, $\mu^{\max}_n \to \mu$ and $\mu^{\min}_n \to \mu$
as $n\to\infty$ (since any limit point of a sequence of
Gibbs measures on the subsets $\Lambda_n$ is a Gibbs measure on
the entire vertex set $W$, and we assume that $\mu$ is the only such
Gibbs measure on $W$).

But if $\nu$ is any measure on $\{0,1\}^W$, then
from the monotonicity of $B$ we have that $\mu^{\min}_{k,n}\leq
B^k \nu\leq \mu^{\max}_{k,n}$ for all $k$.
In particular suppose that $\nu$ is invariant. Then $B^k\nu=\nu$ for all $k$.
But then by taking $k$ and $n$ sufficiently large, we can sandwich
$\nu$ between two measures which are as close to $\mu$ as desired.
This gives that $\nu=\mu$, and so indeed $B$ has a unique
stationary distribution. Hence the probability of a draw is $0$,
as required.

If however $m>2$, the graph is no longer bipartite, and the monotonicity
argument above no longer works. In particular, we cannot define a partial
ordering on configurations in the same way. Such a partial ordering
was already used in the proof of Proposition \ref{prop-ergodicity},
and without monotonicity, it is no longer clear that uniqueness of the invariant distribution for the PCA-like evolution on $\{0,1\}^W$ implies that the
game has probability $0$ of a draw.
(Closely related examples in which ergodicity of a binary PCA does not
imply ergodicity of its envelope PCA are noted in \cite{busic_aap}.)
Furthermore, without monotonicity we could
no longer apply the argument above involving bounds in terms of
Gibbs measures on finite subsets with maximal and minimal boundary conditions.

\subsection{Example graphs with $d\geq 3$}
\label{sec:d3plus}  We now give several examples of graphs $G$ to which one
may hope to apply Theorem~\ref{thm:hardcoredraw}.  (As we will see, the
result can indeed be applied in some cases, but not in others.)  We consider
the vertex set $V=\Z^d$ (or subsets thereof), for various different choices
of the set $\Out(x)$ of vertices to which the token can move from $x$. In
each case, we consider the trapping game in which each site is a trap with
probability $p$ and open with probability $1-p$. All our examples can be
regarded as natural extensions of the original $\Z^2$ game, in the sense that
they reduce to it when we set $d=2$.

\begin{exa}\label{game1}
Let $\Out(x)=\{x+e_i: 1\leq i\leq d\}$. So $|\Out(x)|=d$. This is perhaps the
most natural extension of all. However we cannot apply Theorem
\ref{thm:hardcoredraw} because there is no choice of the automorphism $\phi$
for which assumption (A2) holds.
\end{exa}

\begin{exa}\label{game2} Let
$\Out(x)=\{x\pm e_i + e_d: 1\leq i\leq d-1\}$. This is the example already
mentioned in Section \ref{sec:introhardcore}. Here $|\Out(x)|=2(d-1)$. Since
any step preserves parity, it is natural to restrict to the set of even sites
$\Z^d_{\text{even}}:=\{x\in\Z^d: \sum x_i \text{ is even}\}$.

In two dimensions, the game is isomorphic to the original game on $\Z^2$. For
general $d$, conditions (A1) and (A2) hold with $m=2$ if we set
$S_k=\{x\in\Z^d_\text{even}:x_d=k\}$ and $\phi(x)=x+2e_d$.

To obtain the doubling graph,
consider $D_k=S_k\cup S_{k+1}$ with an edge
between $x\in S_k$ and $y\in S_{k+1}$
whenever $y\in\Out(x)$.
This gives a graph isomorphic to the standard
cubic lattice $\Z^{d-1}$
(for example, the map
\[
(x_1,\dots, x_{d-1},x_d)\to(x_1,\dots, x_{d-1})
\]
gives a graph isomorphism from $D_0=S_0\cup S_1$ to $\Z^{d-1})$.

The graph is vertex-transitive. By Theorem \ref{thm:hardcoredraw}, if there
exist multiple Gibbs distributions for the hard-core model on $\Z^{d-1}$ with
activity $\lambda$, then the percolation game on $G$ with $p=1/(1+\lambda)$
has positive probability of a draw from any vertex.
\end{exa}

\begin{exa}\label{game3}
Now let $\Out(x)=\{x\pm e_1 \pm e_2\dots\pm e_{d-1} +
e_d\}$, so that $|\Out(x)|=2^{d-1}$. Each step changes the
parity of every coordinate, so we restrict to the set
$\Z^d_{\text{bcc}} =\{x\in\Z^d: x_i\equiv x_j\bmod 2 \text{
for all }i,j\}$. Putting an edge between $x$ and $y$
whenever $y\in \Out(x)$, we obtain the \df{body-centred
cubic lattice} in $d$ dimensions. This consists of two
copies of $(2\Z)^d$, each offset from the other by
$(1,1,\dots, 1)$, so that each point of one lies at the
centre of a unit cube of the other; the edges are given by
joining each point to the $2^{d}$ corners of the
surrounding unit cube. See Figure \ref{fig:bcc}
for an illustration.
\begin{figure}[!]
\begin{center}
\includegraphics[width=0.55\textwidth]{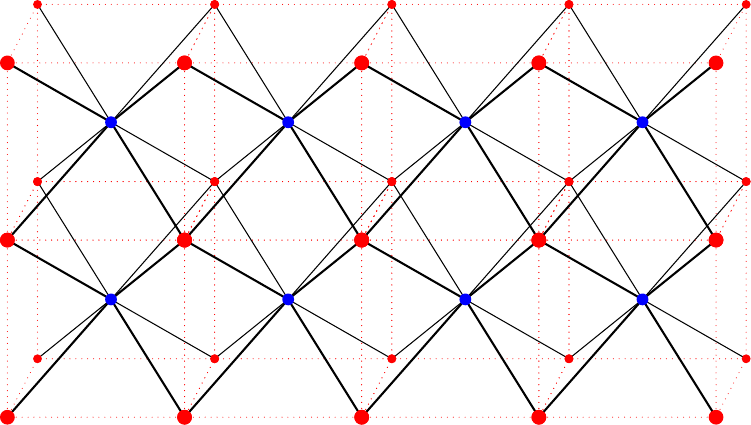}
\end{center}
\caption{The three-dimensional
body-centred cubic lattice
(which is the doubling graph for the PCA
associated to Example \ref{game3} when $d=4$).
The two underlying copies of $\Z^3$ are shown with red
and with blue vertices. The black lines are the edges
of the body-centred cubic lattice, and the dotted red lines
show the nearest-neighbour edges in the red copy of $\Z^3$.
}\label{fig:bcc}
\end{figure}

Conditions (A1) and (A2) hold for $m=2$ with $S_k=\{x\in\Z^d_{\text{bcc}}:
x_d=k\}$ and $\phi(x)=x+2e_d$.  The doubling graph $D$ isomorphic to
$D_k=S_k\cup S_{k+1}$ for each $k$ is now the body-centered cubic lattice in
$d-1$ dimensions.  The map $v(x)=(x_1, x_2, \dots, x_{d-1})$ from
$\Z^d_{\text{bcc}}$ to $\Z^{d-1}_{\text{bcc}}$ restricts to an isomorphism
between $D_k$ and $D$ for each $k$.

When $d=2$ or $d=3$ the graph $G$ is isomorphic to that in
Example~\ref{game2} above, but for $d\geq 4$ the graphs are different.
Existence of multiple hard-core distributions on $\Z^{d-1}_{\text{bcc}}$ will
imply existence of draws on $\Z^d_{\text{bcc}}$.
\end{exa}

\begin{exa}\label{game4}
Let $\Out(x)=\{x+\sum_{i\in S} e_i: S\subset\{1,\dots,d\} \text{ with } 1\leq
|S| \leq d-1\}$. So a move of the game corresponds to incrementing at least
one, but not all, of the coordinates by one. Then $|\Out(x)|=2^d-2$.

Conditions (A1) and (A2) hold with $m=d$ if we set $S_k=\{x:\sum x_i=k\}$ and
$\phi(x)=x+e_1+e_2+\dots+e_d$.  For $d=2$ the game is the same as ever. For
$d>2$ there are some new features. For the first time we have $m>2$, and the
graph $G$ is not bipartite; from a given starting vertex, there are vertices
that can be reached when it is either player's turn. The graph $D$
is $(d-1)$-dimensional and $d$-partite. For $d=3$, it corresponds to the
triangular lattice.  For example, the map
\begin{equation}
\label{trihexmap}
(x_1, x_2, x_3) \to x_1(1,0)+x_2\bigl(-\tfrac12, \tfrac{\sqrt3}2\bigr)+
x_3\bigl(-\tfrac12, -\tfrac{\sqrt3}2\bigr)
\end{equation}
is an isomorphism from $D_k=S_k\cup\dots\cup S_{k+m-1}$ to the triangular
lattice for each $k$.
\end{exa}

\begin{figure}
\begin{center}
\begin{minipage}{0.35\textwidth}
\includegraphics[width=\textwidth]{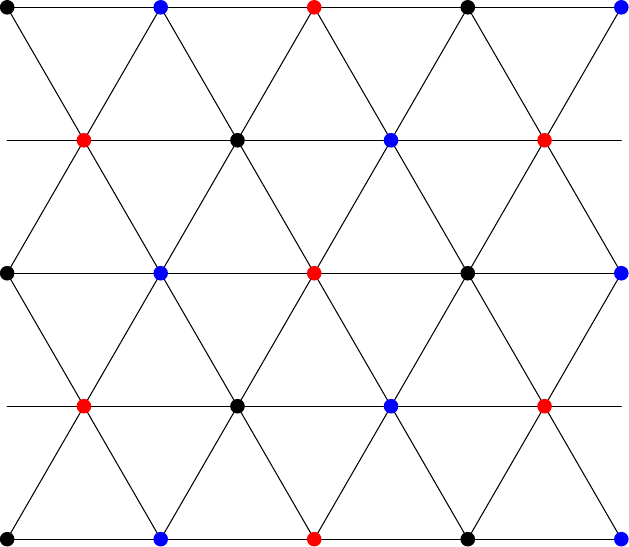}
\end{minipage}
\quad
\begin{minipage}{0.35\textwidth}
\includegraphics[width=\textwidth]{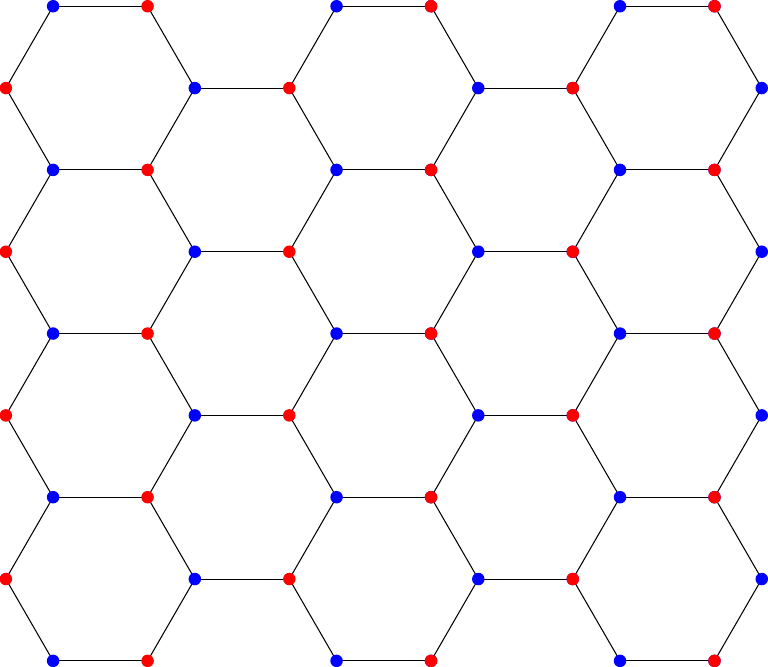}
\end{minipage}
\end{center}
\caption{Triangular lattice and hexagonal lattice.}
\label{fig:triangular}
\end{figure}

\begin{exa}\label{game5}
Fix $r$ with $1\leq r\leq d$,
and now restrict to sites $x\in\Z^d$ such that
$\sum x_i\equiv 0 \text{ or } r\bmod d$.
For $x$ with $\sum x_i\equiv 0 \bmod d$, let
$\Out(x)=\{x+\sum_{i\in S}e_i,
\text{ for any } S \subset\{1,\dots,d\} \text{ with } |S|=r\}$.
Meanwhile, for
$x$ with $\sum x_i\equiv r\bmod d$, let $\Out(x)=\{x+\sum_{i\in S}e_i, \text{
for any } S \subset \{1,\dots,d\} \text{ with } |S|=d-r\}$.

Now $|\Out(x)|=\ch{d}{r}$ for all $x$. Replacing $r$ by $d-r$ gives an
isomorphic graph, so we may assume $1\leq r\leq d/2$. Then conditions (A1)
and (A2) hold with $m=2$, with $\phi(x)=x+\sum_{i=1}^d e_i$, and with
$S_k=\{x:\sum x_i=dk/2\}$ for even $k$ and $S_k=\{x:\sum x_i=d(k-1)/2+r\}$
for odd $k$.

For $d=2$ (and hence $r=1$) the game is the familiar two-dimensional game.
For $d=3$ and $r=1$, we get $|\Out(x)|=3$ and the doubling graph $D$ is the
two-dimensional hexagonal lattice; this is the image of $\{x\in\Z^d: \sum
x_i\equiv 0 \text{ or } 1\bmod 3\}$, with edges between $x$ and $y$ where
$y\in \Out(x)$, under the map (\ref{trihexmap}) above.

For $d=4$ and $r=2$, the graph $G$ is isomorphic to the $d=4$ case of
Example~\ref{game2} above, and so $D$ is the standard cubic lattice $\Z^3$.
For $d=4$ and $r=1$, we have $|\Out(x)|=4$, and $D$ is the so-called
\df{diamond cubic graph} (see for example Section 6.4 of
\cite{ConwaySloane}). This graph may, for example, be represented as
\[
\bigl\{y\in\Z^3: y_1\equiv y_2\equiv y_3\bmod 2
\text{ and } y_1+y_2+y_3\equiv 0\text{ or }1\bmod 4
\bigr\},
\]
with edges between nearest neighbours (which are at distance
$\sqrt{3}/4$).
This is the image of $\{x\in\Z^d: \sum x_i\equiv 0 \text{ or }
1\bmod 3\}$, with edges between $x$ and $y$ where $y\in \Out(x)$,
under the map
\[
\begin{pmatrix}
y_1\\y_2\\y_3
\end{pmatrix}
=
\begin{pmatrix}
\phantom{-}x_1-x_2-x_3+x_4\\
-x_1+x_2-x_3+x_4\\
\phantom{-}x_1+x_2-x_3-x_4
\end{pmatrix}
\]
(see \cite{NagyStrand}). See Figure \ref{fig:diamond} for an illustration.
\end{exa}
\begin{figure}
\begin{center}
\begin{minipage}{0.45\textwidth}
\vspace{1cm}
\includegraphics[width=\textwidth]{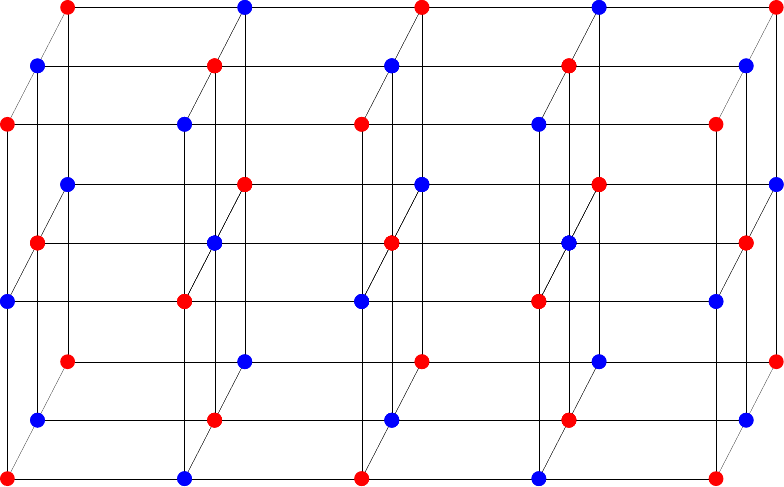}
\vspace{0.5cm}
\end{minipage}
\quad
\begin{minipage}{0.45\textwidth}
\includegraphics[width=\textwidth]{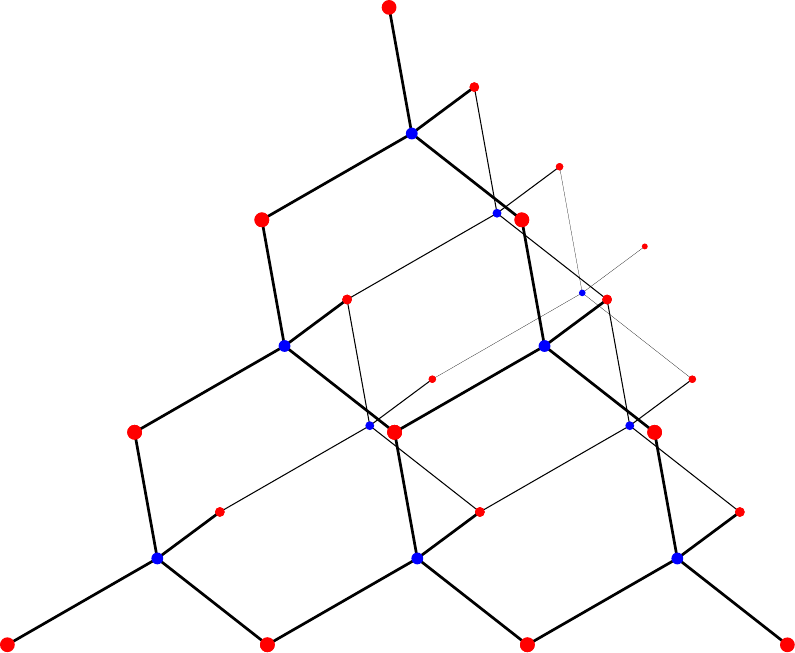}
\end{minipage}
\end{center}
\caption{The cubic lattice and the diamond cubic graph.}
\label{fig:diamond}
\end{figure}

\begin{thm}
\label{thm:higherd} There is positive probability of a draw from every vertex
for sufficiently small $p$ in the following cases: Example \ref{game2} for
all $d\geq 3$; Example \ref{game3} for $d=3$ and $d=4$; Example \ref{game4}
for $d=3$; and Example \ref{game5} for $d=3$ (with $r=1$) and $d=4$ (with $r=1$
 or $r=2$).
\end{thm}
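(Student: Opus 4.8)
The plan is to derive Theorem~\ref{thm:higherd} from Theorem~\ref{thm:hardcoredraw}, thereby reducing it to the statement that, for each relevant doubling graph $D$, the hard-core model on $D$ has more than one Gibbs distribution for all sufficiently large activity $\lambda$. The doubling graphs were identified in Examples~\ref{game2}--\ref{game5}: they are the cubic lattice $\Z^{d-1}$ (Example~\ref{game2} for $d\geq3$; Example~\ref{game3} for $d=3$, where $D\cong\Z^2$; and Example~\ref{game5} for $d=4$, $r=2$, where $D\cong\Z^3$); the three-dimensional body-centred cubic lattice (Example~\ref{game3} for $d=4$); the triangular lattice (Example~\ref{game4} for $d=3$); the hexagonal lattice (Example~\ref{game5} for $d=3$, $r=1$); and the diamond cubic graph (Example~\ref{game5} for $d=4$, $r=1$). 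If the hard-core model on such a $D$ has multiple Gibbs distributions for every $\lambda\geq\lambda_0(D)$, then by Theorem~\ref{thm:hardcoredraw} the corresponding game has positive draw probability whenever $p=1/(1+\lambda)$ with $\lambda\geq\lambda_0(D)$, that is, for all sufficiently small $p$.

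For $\Z^n$ with $n\geq2$, the required multiplicity at large $\lambda$ is the classical result of \cite{dobrushin65}, which settles every case in which $D$ is a cubic lattice. For the remaining four graphs I would establish multiplicity by a Peierls (contour) argument, in the spirit of \cite{dobrushin65, dobrushin}. Three of them --- the $3$-dimensional body-centred cubic lattice, the hexagonal lattice and the diamond cubic graph --- are bipartite (each arises from our construction with $m=2$, so $D$ is $2$-partite, and by (A1) each part induces no edges), vertex-transitive, and have parts of equal density; hence the two parts are their two maximum-density independent sets, of density $\tfrac12$. Work in a large finite region $\Lambda\subseteq D$ with the boundary condition forcing one chosen part to be fully occupied, call a vertex \emph{bad} if it violates this ground-state pattern (empty on the favoured part, or occupied off it), partition the bad vertices into connected clusters, and surround each cluster by a \emph{contour} --- a minimal separating surface in the appropriate dual sense. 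The standard Peierls map (erase a contour $\Gamma$ and fill in the favoured pattern inside it) shows that the probability that a prescribed contour $\Gamma$ occurs around a fixed vertex is at most $\lambda^{-c|\Gamma|}$ for some $c=c(D)>0$, and, since $D$ has bounded degree, the number of contours of size $n$ enclosing the origin is at most $C^n$. Thus $\sum_{n\geq1}C^n\lambda^{-cn}<\tfrac12$ once $\lambda$ is large, uniformly in $\Lambda$; so in the infinite-volume limit with this boundary condition the origin is occupied with probability $>\tfrac12$ if it lies on the favoured part and $<\tfrac12$ otherwise, and choosing the other part as favoured produces a genuinely different Gibbs distribution.

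The triangular lattice ($D$ for Example~\ref{game4} with $d=3$) needs only a minor variant, since it is not bipartite: it is $3$-colourable, each colour class being a triangular sublattice and a maximum-density (density $\tfrac13$) independent set, so the model is of hard-hexagon type and the argument above applies verbatim with three ground states in place of two, giving at least three distinct Gibbs distributions for large $\lambda$. This establishes multiplicity for every $D$ on the list and hence, via Theorem~\ref{thm:hardcoredraw}, positive probability of a draw from \emph{some} vertex for all small $p$. To upgrade ``some vertex'' to ``every vertex'': the graphs $G$ in Examples~\ref{game2}--\ref{game4} are vertex-transitive and the joint law of the percolation configuration and the game-outcome map is equivariant under their symmetries, so the draw probability is the same at every vertex; in Example~\ref{game5}, $G$ is quasi-transitive with two vertex classes, each acted on transitively by the translations of $G$ lying in the index-$d$ sublattice, and since a hard-core Gibbs measure on the bipartite $D$ is determined by its marginal on either part, the two distinct Gibbs measures have distinct marginals on both parts, so the reduction in the proof of Theorem~\ref{thm:hardcoredraw} exhibits positive draw probability at a vertex of each class.

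I expect the one genuinely technical point to be making the contour formalism precise on the less familiar lattices --- choosing the right notion of contour and extracting $c(D)>0$ for the body-centred cubic and diamond cubic graphs, and bookkeeping the threefold degeneracy for the triangular lattice. This should be routine, though, since the only structural inputs are that each $D$ is a periodic, bounded-degree lattice of dimension at least two (so that a contour enclosing the origin has size bounded below and contours of a given size are at most exponentially numerous) together with the explicit description of its maximum-density independent sets, all of which hold here.
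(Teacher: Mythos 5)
Your reduction is exactly the paper's: apply Theorem~\ref{thm:hardcoredraw} to each example, feed it non-uniqueness of the hard-core Gibbs distribution on the relevant doubling graph $D$ at large activity, and use transitivity of $G$ to pass from ``some vertex'' to ``every vertex''. The difference is in how the non-uniqueness inputs are justified. The paper treats this entirely by citation: Dobrushin \cite{dobrushin65} for the cubic lattices $\Z^{d-1}$, and Heilmann \cite{heilmann} and Runnels \cite{runnels} for the triangular and hexagonal lattices and for the body-centred cubic and diamond cubic graphs. You instead sketch Peierls arguments for the non-cubic cases, and that sketch is where your proposal is genuinely incomplete. The map you describe --- ``erase a contour $\Gamma$ and fill in the favoured pattern inside it'' --- does not yield the bound $\lambda^{-c|\Gamma|}$: the competing ground states of the hard-core model on these bipartite lattices have \emph{equal} density, so replacing the interior of a contour by the favoured pattern gains no particles in the bulk, and it also destroys the information needed for the map to be (boundedly) injective. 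The working Peierls argument for hard-core models instead shifts the configuration inside the contour by a lattice vector and harvests $c|\Gamma|$ extra particles from the collar around the contour, and both the choice of contour and the particle count are graph-dependent --- precisely the point the paper flags in the remark after Theorem~\ref{thm:higherd} as ``less straightforward than in other settings such as the Ising model.'' Your treatment of the triangular lattice (``applies verbatim with three ground states in place of two'') likewise skates over the extra combinatorics of a threefold-degenerate ground state, which is why the paper defers to Heilmann and Runnels rather than giving a contour argument.

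None of this makes the theorem false --- the required non-uniqueness statements are all in the literature, and your final paragraph on upgrading ``some vertex'' to ``every vertex'' is if anything more careful than the paper's one-line appeal to vertex-transitivity. But as written, your proof replaces the paper's citations with an argument whose central estimate (the $\lambda^{-c|\Gamma|}$ bound) is asserted via an incorrect Peierls map, so the non-cubic cases are not actually established. Either cite the known results, as the paper does, or commit to the shift-based contour construction for each of the four lattices and carry out the particle count.
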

\begin{proof}
In the cases listed, it is known that there exist multiple Gibbs
distributions for the hard-core model on the associated graph $D$ when the
activity parameter $\lambda$ is sufficiently high.  For the standard cubic
lattice in any dimension greater than 1, the result goes back to Dobrushin
\cite{dobrushin65}. Other models in two and three dimensions were covered by
Heilmann \cite{heilmann} and Runnels \cite{runnels}, including the triangular
and hexagonal lattices in two dimensions and the body-centered cubic lattice
and the diamond cubic graph in three dimensions.

Theorem \ref{thm:hardcoredraw} shows that there is positive probability of a
draw for small enough $p$ from some vertex, and since all the graphs $G$ are
vertex-transitive, the conclusion holds for every vertex.
\end{proof}

It is expected that in fact the hard-core model on $D$ has multiple Gibbs
distributions for $\lambda$ sufficiently large
in all of Examples \ref{game2}--\ref{game5}
whenever $d\geq 3$ (so that $D$ has dimension at least $2$).  This could likely be proved by Peierls contour arguments, although this requires a
suitable definition of a contour, which is typically graph-dependent, and
less straightforward than in other settings such as the Ising model.  Via
Theorem \ref{thm:hardcoredraw}, such non-uniqueness would imply
existence of draws for the corresponding graphs $G$.

We emphasize again that there is a more fundamental obstacle to proving
existence of draws for the standard oriented lattice $\Z^d$ of
Example~\ref{game1}, in that our condition (A2) does not hold here.

\subsection{Extending the hard-core correspondence}
\label{sec:hardcore-extension}
Various further extensions can be made while still preserving
the correspondence to the hard-core model. For example,
in the class of models considered in Theorem
\ref{thm:hardcoredraw}, we can augment the set of
allowable moves from site $x$ to include the point $\phi(x)$ itself.

\begin{samepage}
Specifically, replace (A1) and (A2) by the following assumptions:
\begin{itemize}
\item[(A$1'$)] For all $x\in S_k$, $\Out(x)\subset
    S_{k+1}\cup\dots\cup S_{k+m-1} \cup\{\phi(x)\}$.
\item[(A$2'$)] There is a graph automorphism $\phi$ of
    $G$ that maps $S_k$ to $S_{k+m}$ for every $k$, such
    that $\phi(x)\in\Out(x)$ for all $x$, and such that
    $\Out(x)\setminus\{\phi(x)\} =\In(\phi(x))\setminus
    x$.
\end{itemize}
\end{samepage}

Define $D$ as before; $D$ is a graph
isomorphic to any $D_k$, where $D_k$ is the graph with
vertex set $S_k\cup\dots\cup S_{k+m-1}$ and an undirected
edge $(x,y)$ whenever $(x,y)$ is a directed edge of $V$.
Note now an edge $(x,\phi(x))$ in $G$ does not give
a corresponding edge in $D$.

\begin{prop}
\label{prop:extendedhardcoredraw} Suppose that the graph $G$ satisfies
(A$1'$) and (A$2'$). If there are multiple Gibbs distributions for the
hard-core model on $D$ with activity $\lambda<1$. then the percolation game
on $G$ with $p=1-\lambda$ has positive probability of a draw from some
vertex.
\end{prop}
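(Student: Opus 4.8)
The plan is to follow the proof of Theorem~\ref{thm:hardcoredraw} essentially line by line; the one new phenomenon is that the extra move to $\phi(x)$, once transported to the doubling graph $D$, becomes a dependence of the updated configuration on the value at the \emph{same} vertex one time-step earlier, and absorbing this dependence is exactly what changes the activity--density relation from $p=1/(1+\lambda)$ to $p=1-\lambda$. First I would check that Lemmas~\ref{lemma:Dk} and~\ref{lemma:DkDk+1} remain valid under (A$1'$) and (A$2'$): the edge $(x,\phi(x))$ of $G$ joins $S_k$ to $S_{k+m}$, so it is never internal to any $D_j$ and plays no role in the undirected structure of the $D_j$; hence the neighbourhood of $x\in S_k$ inside $D_k$ is $\Out(x)\setminus\{\phi(x)\}$, and the maps $\chi_k$, and so the $f_k$, are still graph isomorphisms, using now the identity $\Out(x)\setminus\{\phi(x)\}=\In(\phi(x))\setminus x$ of (A$2'$) in place of $\Out(x)=\In(\phi(x))$. (In the analogue of Lemma~\ref{lemma:DkDk+1}(c) one keeps $y$ ranging over $D_k$, so the excluded out-neighbour $\phi(x)$ causes no trouble.)

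Next, specialise the game recursion to the two-symbol alphabet, obtaining (\ref{01recursion}) but now with $\Out(x)\subset S_{k+1}\cup\dots\cup S_{k+m-1}\cup\{\phi(x)\}$, so that $(\gamma(x):x\in S_k)$ is a function of $(\gamma(x):x\in S_{k+1}\cup\dots\cup S_{k+m})$ and of the open/closed status of the sites of $S_k$. Fix $K$ and a boundary condition on $S_K\cup\dots\cup S_{K+m-1}$, independent of the percolation configuration on $\bigcup_{r<K}S_r$, iterate downwards, and transport to $D$ via $\sigma_k(v):=\gamma(f_k^{-1}(v))$ for $k\le K$, so that $\sigma_{k+1}$ encodes $(\gamma(x):x\in D_{k+1})$ and $\sigma_k$ is a random function of $\sigma_{k+1}$. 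The key step is the description of $\sigma_{k+1}\mapsto\sigma_k$. For $v$ outside the class $W_{k\bmod m}$ one has $\sigma_k(v)=\sigma_{k+1}(v)$, exactly as in Theorem~\ref{thm:hardcoredraw}. For $v\in W_{k\bmod m}$, put $x=f_k^{-1}(v)\in S_k$; the $D$-neighbours $u\sim v$ are the $f_k$-images of the vertices of $\Out(x)\setminus\{\phi(x)\}$, while for the remaining out-neighbour $\phi(x)$ Lemma~\ref{lemma:DkDk+1}(a) gives $f_{k+1}(\phi(x))=f_k(x)=v$, hence $\gamma(\phi(x))=\sigma_{k+1}(v)$. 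So (\ref{01recursion}) becomes: $\sigma_k(v)=1$ if and only if $x$ is open, every $D$-neighbour $u\sim v$ has $\sigma_{k+1}(u)=0$, \emph{and} $\sigma_{k+1}(v)=0$; otherwise $\sigma_k(v)=0$.

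Now I would show that, taking $\lambda=1-p$ (so $p=1-\lambda\in(0,1)$ precisely because $\lambda<1$), this transition preserves any Gibbs distribution $\mu$ for the hard-core model on $D$ with activity $\lambda$. Condition on the values of $\sigma_{k+1}$ off the class $W_{k\bmod m}$; these are untouched by the transition (the class is an independent set of $D$), and under $\mu$ the values on $W_{k\bmod m}$ are conditionally independent, each $v$ taking value $1$ with probability $\lambda/(1+\lambda)$ if all its neighbours are $0$, and $0$ otherwise. For such a $v$, since the open/closed bits of the sites of $S_k$ are i.i.d.\ and independent of $\sigma_{k+1}$, the new value $\sigma_k(v)$ equals $1$ precisely when the two independent events $\{\sigma_{k+1}(v)=0\}$ (conditional probability $1/(1+\lambda)$) and $\{x\text{ open}\}$ (probability $1-p=\lambda$) both occur, i.e.\ with conditional probability $\lambda/(1+\lambda)$; and if some neighbour of $v$ equals $1$ then $\sigma_k(v)=0$. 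Thus $W_{k\bmod m}$ is resampled conditionally independently with exactly the single-site hard-core conditional laws, and the DLR characterisation of $\mu$ yields $\sigma_k\sim\mu$. (In Theorem~\ref{thm:hardcoredraw} the factor $1/(1+\lambda)$ was absent, so $1-p$ itself had to equal $\lambda/(1+\lambda)$; here the extra factor is exactly cancelled by the choice $1-p=\lambda$, which is also why $\lambda<1$ is needed.)

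From here the conclusion follows word for word as in the last two paragraphs of the proof of Theorem~\ref{thm:hardcoredraw}. Given distinct Gibbs distributions $\mu\ne\nu$, choose the boundary condition at level $K$ so that $\sigma_K^{(K)}\sim\mu$ for odd $K$ and $\sigma_K^{(K)}\sim\nu$ for even $K$; by the invariance just proved the law of $\sigma_0^{(K)}$ then alternates between $\mu$ and $\nu$, so $(\gamma^{(K)}(x):x\in S_0\cup\dots\cup S_{m-1})$ does not converge almost surely as $K\to\infty$. On the other hand, if the percolation game on $G$ had probability $0$ of a draw from every vertex, then from each vertex $x$ one player would have a strategy forcing a win in fewer than some a.s.\ finite $N=N(\omega)$ moves; since each move raises the $S$-index by between $1$ and $m$, for all large $K$ the value $\gamma^{(K)}(x)$ would coincide with the true game outcome regardless of the boundary condition (exactly as in the proof of Proposition~\ref{prop-draws}), forcing almost sure convergence of $(\gamma^{(K)}(x):x\in S_0\cup\dots\cup S_{m-1})$ --- a contradiction. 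Hence the game has positive probability of a draw from some vertex. The only genuinely new point, and the one to get right, is the identity $\gamma(\phi(x))=\sigma_{k+1}(v)$ and the ensuing bookkeeping of conditional probabilities producing $p=1-\lambda$; everything else transcribes the argument for Theorem~\ref{thm:hardcoredraw}.
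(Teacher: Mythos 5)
Your proof is correct and takes essentially the same route the paper sketches: the extra move to $\phi(x)$ is transported, via $f_{k+1}(\phi(x))=f_k(x)$, into a dependence of the update on the vertex's own previous state (a $1$ must flip to $0$), and the resulting modified Glauber update preserves hard-core Gibbs measures on $D$ exactly when $\lambda=1-p$, after which the non-convergence contradiction is verbatim from Theorem~\ref{thm:hardcoredraw}. The only cosmetic difference is that you verify stationarity directly from the conditional independence of the class $W_{k\newmod m}$ under the Gibbs measure, whereas the paper suggests checking detailed balance at a single site; both are valid.
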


The method of proof is a slight variation on that of Theorem \ref{thm:hardcoredraw},
which we indicate briefly.
To reflect the presence of the edge $(x,\phi(x))$,
we change the hard-core update procedure.
When we perform an update of the vertex class $W_i$,
we now add that any vertex $v\in W_i$ which is in state 1 before the update must
move to state 0 after the update; otherwise the update at $v$ proceeds as before.

Again one can show that hard-core Gibbs distributions are stationarity under
such updates, but now with the activity parameter $\lambda$ equal to $1-p$
rather than $1/p-1$ as previously. (To verify the stationarity, one can start
by checking the detailed balance condition for an update at a single site;
then if the distribution is stationary for the update at any single site, it
is also invariant under simultaneous updates at any set of non-neighbouring
sites.)

Note that now when $p\to0$, we have $\lambda\to1$ rather than
$\lambda\to\infty$. Hence to show existence of draws for some $p$, we need
multiplicity of Gibbs distributions for some $\lambda<1$. For the case of the
standard cubic lattice, Galvin and Kahn \cite{galvin-kahn} show that this
holds for sufficiently high dimension, so that we can deduce the existence of
draws for the variant of Example \ref{game2} in which $\Out(x)=\{x\pm
e_i+e_d:1\leq i\leq d-1\}\cup\{x+e_d\}$, when $d$ is sufficiently large.

\section{Open questions}
\label{sec:open}

\subsection{The trapping game on the oriented cubic lattice}
For the trapping game on $\Z^d$ (where the allowed moves from site $x$ are
to any open site $x+e_i$ for $1\leq i\leq d$), do there exist any $p\in(0,1)$
and $d\geq 3$ for which draws occur with positive probability?

\subsection{Percolation games in higher dimensions}
Demonstrate positive probability of draws for percolation games
with some $q>0$ on a natural class of lattices in dimension $d\geq 3$.

\subsection{Monotonicity and phase transition}\label{open-mono}
Consider settings where draws are known to occur -- for example, the trapping game on the even sublattice of $\Z^d$ with $d\geq 3$ and moves allowed from $x$ to any open $x+e_d\pm e_i$ for $1\leq i\leq d-1$. Is the probability of a draw starting from the origin non-increasing in the density $p$ of traps? Or,
at least, is the set of $p$ that have positive draw probability a single interval containing $0$ (so that there is a single critical point at the
upper end of the interval)? If so, what happens at the critical point?
In the light of the connection to the hard-core model, such questions
are probably difficult -- it is unknown, for example, whether the
hard-core model on $\Z^d$ has a single critical
point for uniqueness of Gibbs distributions (see e.g.\
\cite{bgrt} for discussion and recent bounds).

\subsection{Converse statement of Theorem \ref{thm:hardcoredraw}\label{open-hard-core}}
As discussed in Section \ref{sec:converse}, the converse statement
of Theorem 2 holds when $m=2$; that is, when we have a bipartite
``doubling graph" $D$, the uniqueness of the hard-core Gibbs measure
on $D$ is equivalent to zero probability of draws in the trapping game.
Is this true also for $m>2$?
%
\subsection{Computation of winning probabilities}
Can one compute exact winning probabilties for games other than
in the case of the trapping game on $\Z^2$ (see \eqref{win-p})?
For example, as mentioned in the introduction, such questions for
the more general percolation game on $\Z^2$ are related to
the computation of generating functions for directed
animals, enumerated according to their area and perimeter.

\subsection{Mis\`ere games}
\label{subsection:misere}
In the target game (i.e.\ the percolation game with $p=0$ and $q>0$)
the first player to move to a marked site (i.e.\ a target) wins.
This can be seen as a mis\`ere version of the trapping game (i.e.\ the percolation
game with $q=0$ and $p>0$), where the first player to move to a
marked site (i.e.\ a trap) loses. There are also other natural mis\`ere variants of the trapping game.
For example, suppose that each site is marked with probability $p$ and
unmarked with
probability $1-p$, and that moves are allowed from $x\in\Z^2$ only to an
unmarked site in $\{x+e_1,x+e_2\}$, but now declare that if \emph{both} these sites are
marked, then the player whose turn it is to move \emph{wins}.  Does this game have zero probability of a draw for all $p$? Here we don't know of any useful
correspondence to a PCA with alphabet $\{0,1\}$.

\subsection{Elementary probabilistic cellular automata}
Is every elementary PCA (i.e.\ one with $2$ states and a size-$2$ neighhborhood)
on $\Z$ with positive rates ergodic?  Can our weighting approach be extended to
prove ergodicity for other PCA in this class?

\subsection{Undirected lattices}\sloppy
The following game is considered in \cite{BHMW}.
Each site of $\Z^d$ is independently a trap
with probability $p$, and two players alternately move a token.
From an open site $x$, a move is permitted to \emph{any} open nearest neighbour
 $x\pm e_i$
\emph{provided it has not been visited previously}. A
player who cannot move loses. This game is closely related
to maximum matchings, and this is used in \cite{BHMW} to
derive results for biased variants in which odd and even
sites have different percolation parameters.  However, for
the unbiased version described above it is unknown whether
there exist any $p>0$ and $d\geq 2$ for which draws occur
with positive probability. One can also extend the model to
include targets as well as traps.

\section*{Acknowledgments}
JBM was supported by EPSRC Fellowship EP/E060730/1.
IM was supported by the
Fondation Sciences Math\'{e}matiques de Paris.
We are grateful to the referee of an earlier version
for valuable comments. In particular, the discussion
in Section \ref{sec:converse} was prompted by the referee's
observation that such an approach gives a straightforward proof of
the $q=0$ case of Theorem \ref{thm:2d}.
\bibliographystyle{alpha}
\bibliography{biblio}
\end{document}